\documentclass{amsart}
\usepackage[normalem]{ulem}
\usepackage{amssymb,epsfig,mathrsfs,mathpazo,scalerel,url,amsthm,thmtools,bbm}
\usepackage{xcolor}
\usepackage{stmaryrd}
\usepackage{epsfig,bm}
\usepackage[multiple]{footmisc}
\usepackage{accents} 
\usepackage{mathtools} 

\usepackage{numprint}
\usepackage{arydshln, booktabs, makecell, pbox, tabularx}

\usepackage{graphicx, caption, subcaption}
\usepackage[section]{placeins}

\newtheorem{theorem}{Theorem}[section]
\newtheorem{lemma}[theorem]{Lemma}

\declaretheorem[style=remark,qed=$\vartriangle$,sibling=theorem]{remark}
\numberwithin{equation}{section}

\newcommand{\R}{\mathbb R}
\newcommand{\C}{\mathbb C}

\newcommand{\cL}{\mathcal L}
\newcommand{\cS}{\mathcal S}
\newcommand{\cN}{\mathcal N}
\newcommand{\Lis}{\cL\mathrm{is}}

\newcommand{\identity}{\mathrm{Id}}

\DeclareMathOperator{\vol}{vol}

\DeclareMathOperator{\supp}{supp}

\DeclareMathOperator*{\argmin}{argmin}

\DeclareMathOperator{\Span}{span}

\DeclareMathOperator{\divv}{div}

\newcommand{\new}[1]{#1}

\newcommand{\be}{\begin{equation}}
\newcommand{\ee}{\end{equation}}

\newcommand{\tria}{{\mathcal T}}
\newcommand{\RT}{\mathit{RT}}

%
{\par\begin{samepage}%
\begin{tabbing}\ttfamily%
 \hspace*{5mm}\=\hspace{3ex}\=\hspace{3ex}\=\hspace{3ex}\=\hspace{3ex}%
\=\hspace{3ex}\=\hspace{3ex}\=\hspace{3ex}\=\hspace{3ex}\kill}%
{\end{tabbing}\end{samepage}}

\makeatletter
\@namedef{subjclassname@2020}{%
  \textup{2020} Mathematics Subject Classification}
\makeatother


\title[Preconditioning of a pollution-free discretization of the Helmholtz equation]{Preconditioning of a pollution-free discretization of the Helmholtz equation}

\date{\today}

\author{Harald Monsuur}

\address{
Korteweg--de Vries (KdV) Institute for Mathematics, University of Amsterdam, P.O. Box 94248, 1090 GE Amsterdam, The Netherlands.
}
\email{harald.monsuur@hotmail.com}

\thanks{This research has been supported by the Netherlands Organization for Scientific Research (NWO)
under contract.~no.~613.009.138. 
We acknowledge the support of SURF (www.surf.nl) in using the National Supercomputer Snellius.}

\subjclass[2020]{
35J05, 
35J15, 
65F08, 
65N30,  
65N22, 
65N50.
}
\keywords{Helmholtz equation, ultra-weak FOSLS, optimal test-norm, pollution-free approximation, iterative methods (MINRES), subspace correction}

\begin{document}

\begin{abstract} 
We present a pollution-free first order system least squares (FOSLS) formulation for the Helmholtz equation, solved iteratively using a block preconditioner. This preconditioner consists of two components: one for the Schur complement, which corresponds to a preconditioner on $L_2(\Omega)$, and another defined on the test space, which we ensure remains Hermitian positive definite using subspace correction techniques. The proposed method is easy to implement and is directly applicable to general domains, including scattering problems. Numerical experiments demonstrate a linear dependence of the number of MINRES iterations on the wave number $\kappa$. We also introduce an approach to estimate algebraic errors which prevents unnecessary iterations.
\end{abstract}

\maketitle
\section{Introduction}
\subsection{The Helmholtz equation}
In this work we consider the Helmholtz equation on a bounded Lipschitz domain $\Omega\subset \R^d$. The Helmholtz equation with (mixed) Dirichlet, Neumann and/or Robin boundary conditions consists of finding $\phi \colon \Omega \to \mathbbm{C}$ that satisfies
\be \label{chap4:eq:helmholtz}
\begin{aligned}
-\Delta \phi-\kappa^2 \phi &=  f & &\text{on } \Omega,\\
\phi & =  g_D \quad & &\text{on } \Gamma_D,\\
\tfrac{\partial \phi}{\partial \vec{n}} & = g_N \quad & &\text{on } \Gamma_N,\\
\tfrac{\partial \phi}{\partial \vec{n}}- i \kappa \phi&=  g_R \quad  & &\text{on } \Gamma_R,
\end{aligned}
\ee
where $f$ is a given source term, and $g_D$, $g_N$, and $g_R$ are prescribed boundary data on the Dirichlet, Neumann, and Robin parts of the boundary, respectively.

We assume that the wave number $\kappa$ is real and positive, and that the boundary $\partial\Omega$ consists of three disjoint components $\Gamma_D, \Gamma_N$ and $\Gamma_R$ with $|\Gamma_R|>0$.
\subsection{Numerical approximation of the Helmholtz equation}
Approximating the solution to the Helmholtz problem is a difficult task, mainly for three reasons. First of all, with piecewise polynomial approximations, one needs many unknowns to approximate solutions well because of their generally oscillatory nature. Secondly, there exists the problem of pollution, that is, quasi-optimality of solutions is not always guaranteed. For the Galerkin method in particular, quasi-optimal solutions are usually only obtained under extra conditions on the mesh-size and the polynomial degree. In the seminal work \cite{202.8} it was shown that quasi-optimal solutions are obtained under the condition that $\tfrac{\kappa h}{p}$ is sufficiently small, and the polynomial degree $p$ is at least $\mathcal{O}(\log \kappa)$. Lastly, obtaining iterative solutions of the resulting matrix-vector equation is challenging since the Helmholtz problem becomes increasingly ill-conditioned for large wave numbers. Consequently, many techniques designed for elliptic problems become less effective at higher wave numbers (see \cite{MR3050918}).

Numerous methods have been proposed to solve the Helmholtz equation. One class of methods uses approximation properties of problem adapted basis functions (e.g. \cite{243.53, 138.296, MR4490294}). Another substantial class of methods is based on the approximation by piecewise polynomials, as we shall consider. An example is provided by the Discontinuous Galerkin methods generated by the Ultra-Weak Variational Formulation (UWVF) (\cite{35.9373,35.8656}).
Some methods have the benefit of producing positive definite algebraic systems, including (First Order) Least Squares methods (\cite{182.35, 38.72, 20.186}) and Discontinuous Petrov–Galerkin (DPG) methods (\cite{64.155, 75.63}). The method we consider here will lead to a saddle-point discretization, which has many of the same benefits as positive definite discrete systems, and is closely related to the DPG method.

\subsection{Ultra-weak first order system formulation, and optimal test norm.}
In this article we build upon our previous article \cite{204.18}. There the problem of pollution is tackled by introducing a first order system formulation and employing the optimal test norm. To summarize, we wrote the Helmholtz equation as an ultra-weak first order variational system $\langle \mathbbm{u}, B_\kappa' \mathbbm{v}\rangle_U=q(\mathbbm{v})$ ($ \mathbbm{v} \in V$), where $U:=L_2(\Omega)\times L_2(\Omega)^d$, and $V$ is a closed subspace of $H^1(\Omega)\times H(\divv;\Omega)$ defined by the incorporation of adjoint homogeneous boundary conditions. Here, $\mathbbm{u}=(\phi,\frac{1}{\kappa} \nabla \phi)$, with $\phi$ denoting the Helmholtz solution, $B_\kappa'$ is a partial differential operator of first order, and $q \in V'$ is a functional defined in terms of the data of the Helmholtz problem.
We demonstrated that for any $\kappa>0$ this formulation is well-posed in the sense that $B_\kappa$, i.e., the adjoint of $B_\kappa'$, is a boundedly invertible operator
 from $U$ to the dual space $V'$.
 
Still, when both $U$ and $V$ are equipped with their canonical norms, the condition number of $B_\kappa$ increases with increasing $\kappa$. To deal with this problem, we replaced the canonical norm on $V$ by the so-called \emph{optimal test norm} $\|\cdot\|_{V_\kappa}:=\|B'_\kappa\cdot\|_U$, and equipped $V'$ with the resulting dual norm. This modification makes $B_\kappa$ an isometry. 
Consequently, given a finite-dimensional subspace $U^\delta \subset U$, the least squares approximation $\hat{\mathbbm{u}}^\delta:=\argmin_{\mathbbm{w}^\delta \in U^\delta}\|q-B_\kappa \mathbbm{w}^\delta\|_{V_\kappa'}$ is the \emph{best} approximation to $\mathbbm{u}$ from $U^\delta$ with respect to the norm on $U$. Here $\delta$, which refers to 'discrete', serves as an index that indicates that $U^\delta$ belongs to a family of finite-dimensional subspaces of $U$.
The use of the optimal test norm has been advocated in \cite{64.155,45.44,35.8565}, and can already be found in \cite{19.83}.

\subsection{`Practical' method}
Since $V_\kappa$ (we write $V_\kappa$ instead of $V$ to emphasize the use of the $\kappa$-dependent norm) is an infinite-dimensional space, we cannot compute the residual minimizer w.r.t.~the norm $\sup_{0 \neq \mathbbm{v}\in V_\kappa} \frac{|\cdot(\mathbbm{v})|}{\|B_\kappa' \mathbbm{v}\|_{U}}$.
By replacing the supremum over $\mathbbm{v}\in V_\kappa$ by a supremum over $\mathbbm{v}^\delta\in V_\kappa^\delta$ for some (sufficiently large) finite-dimensional subspace $V_\kappa^\delta \subset V_\kappa$
we obtain an implementable `practical' method. Its solution is obtained as the second component of the pair $(\mathbbm{v}^\delta,\mathbbm{u}^\delta) \in V_\kappa^\delta \times U^\delta$ that solves 
\be \label{chap4:zadel}
\left\{
\begin{array}{lcll}
\langle B_\kappa'\mathbbm{v}^\delta, B_\kappa'\undertilde{\mathbbm{v}}^\delta\rangle_U +
\langle \mathbbm{u}^{\delta}, B_\kappa'\undertilde{\mathbbm{v}}^\delta\rangle_U 
& \!\!= \!\!& q(\undertilde{\mathbbm{v}}^\delta) & (\undertilde{\mathbbm{v}}^\delta \in V_\kappa^\delta),\\
\langle B_\kappa'\mathbbm{v}^\delta,\undertilde{\mathbbm{u}}^\delta\rangle_U
& \!\!=\!\! & 0 & (\undertilde{\mathbbm{u}}^\delta \in U^\delta).
\end{array}
\right.
\ee
We refer to $V^\delta_\kappa$ and $U^\delta$ as the test and trial space. Since the above system is a First Order System Least Squares method in disguise we will refer to the above method as the FOSLS method.

If the inf-sup constant
$\gamma^\delta_\kappa=\gamma^\delta_\kappa(U^\delta,V_\kappa^\delta):= \inf_{0 \neq \undertilde{\mathbbm{u}}^\delta \in U^\delta} \sup_{0 \neq \undertilde{\mathbbm{v}}^\delta \in V_\kappa^\delta} \frac{|
\langle \undertilde{\mathbbm{u}}^{\delta}, B_\kappa'\undertilde{\mathbbm{v}}^\delta\rangle_U|}{\|\undertilde{\mathbbm{u}}^\delta\|_U\|B_\kappa' \undertilde{\mathbbm{v}}^\delta\|_{U}}$, is positive, the above system has a unique solution and it holds that 
$$
\|\mathbbm{u}-\mathbbm{u}^\delta\|_U \leq \tfrac{1}{\gamma_\kappa^\delta} \inf_{\undertilde{\mathbbm{u}}^\delta \in U^\delta} \|\mathbbm{u}-\undertilde{\mathbbm{u}}^\delta\|_U.
$$
If, for some constant $C>0$, one chooses $V_\kappa^\delta=V_\kappa^\delta(U^\delta)$ such that $\gamma_\kappa^\delta\geq C$ for any $\delta$, the solutions $u^\delta$ are quasi-best, i.e. the method is pollution-free.

In \cite{204.18}, for the ideal case of convex polygonal domains with Robin boundary conditions, we investigated how this can be done for trial spaces of the form $U^\delta = \cS_p^{-1}(\tria^\delta)^{d+1}$ (i.e. spaces of discontinuous piecewise polynomials of degree $p$). We showed that it suffices for the polynomial degree $\tilde p$ of the test space $V^\delta_\kappa$ to be proportional to $\max (\log \kappa, p^2)$, where the mesh for $V^\delta_\kappa$ coincides with that of $U^\delta$, apart from a slight refinement near the corners of the domain. 
As a complementary topic in this article, we will investigate the dependence of $\gamma_\kappa^\delta$ on $p,\tilde p$ and $\kappa$ for domains with general boundary conditions.

\subsection{Solving the algebraic system}
The main topic of this paper is the iterative solution of the algebraic system ~\eqref{chap4:zadel}.
There is a wide range of research done on obtaining the solution to algebraic systems arising from the Helmholtz equation using iterative methods. Due to the ill-conditioned nature of the Helmholtz equation, many standard iterative methods are ineffective ~\cite{MR3050918}. To resolve this problem, many preconditioning methods have been designed, including domain decomposition methods ~\cite{MR4286258, MR3647952, MR2277034, MR3647407, MR3359680}, shifted Laplacian methods ~\cite{MR3395145, MR3084825, MR2387515}, and sweeping domain preconditioners ~\cite{MR2789492, MR2818416, MR3085122, MR4126398, MR4196155, MR2199758, MR3179763}. There also exist two-grid methods ~\cite{MR3285899, MR3084826} and multigrid methods which include wave-ray corrections ~\cite{MR1615163, 182.35}. For the DPG method, there also exist multilevel preconditioners ~\cite{MR4630864, MR3630816}.

In this work, we opt to use a block preconditioner for our saddle-point formulation. The Schur complement of our formulation corresponds to a uniformly boundedly invertible operator on $L_2(\Omega)^{d+1}$, for which a preconditioner can be easily devised. The upper left block corresponds to the inner product on $V_{\kappa}$ and is therefore positive definite. In this work we attempt to build an efficient preconditioner for this upper left block based on successive subspace corrections~\cite{MR1193013}. 

Since we devise a block preconditioner using Hermitian positive definite operators for a saddle-point formulation, we can make use of the preconditioned MINRES method \cite{MR383715}. 

For iterative solvers designed for saddle-point systems, convergence can be guaranteed in terms of the eigenvalues of the preconditioned system, provided that the block-preconditioner is Hermitian positive definite. This property offers a significant advantage over solution techniques based on Galerkin discretization, which produce indefinite systems, that are even non-Hermitian when Robin boundary conditions are present. In such cases, one often resorts to GMRES, where the choice of a suitable preconditioner becomes more delicate and may depend on problem-specific parameters. In contrast, our preconditioner can be directly applied to any problem, including so-called \emph{scattering problems}. 

\subsection{Adaptivity and error estimation}
One major advantage of the FOSLS method is its applicability to (adaptively refined) non-quasi-uniform meshes. Using the computed residual $\|B'_\kappa \mathbbm{v}^\delta\|_U$ from the 'practical' method as an estimator for the total error $\|\mathbbm{u}-\mathbbm{u}^\delta\|_U$, and by splitting this residual into local contributions, we can apply Dörfler marking to drive adaptive mesh-refinement. We only need solutions with sufficient accuracy on intermediate meshes, which makes the process of creating a suitable mesh quite efficient.

\subsection{Organisation}
In Section \ref{chap4:sec:helm} we summarize findings about the ultra-weak first order system formulation of the Helmholtz equation from \cite{204.18}. We first write the Helmholtz equation as a first order system. By applying integration-by-parts, we derive an ultra-weak formulation which corresponds to a map from $U=L_2(\Omega) \times L_2(\Omega)^d$ to the dual of a space $V_\kappa$. Thanks to $U$ being a product of $L_2(\Omega)$-spaces, we can use the \emph{optimal test norm} to obtain \emph{pollution-free} approximations. This \emph{ideal} method, which cannot be implemented, is then replaced by a \emph{practical} method. Finally, we discuss error estimation and adaptivity. In Section \ref{chap4:sec:Iterative solvers} first we discuss iterative methods and block-preconditioners. We then devise two preconditioners, one for the Schur complement of our system and one for the upper left block. In Section \ref{chap4:sec:numerics} we present some numerical results. 

\subsection{Notations}
For normed linear spaces $E$ and $F$, by $\cL(E, F)$ we denote the normed linear
space of bounded linear mappings $E \rightarrow F$, and by $\Lis(E, F)$ its subset of boundedly invertible linear mappings $E \rightarrow F$. We write $E \hookrightarrow F$ to denote that $E$ is
continuously embedded into $F$. Since we consider linear spaces over $\C$, for a normed linear space $E$ its dual $E'$ is the normed linear space of anti-linear functionals.

By $C \lesssim D$ we will mean that $C$ can be bounded by a multiple of $D$, unless explicitly stated otherwise, \emph{independently} of parameters which $C$ and $D$ may depend on, such as the wave number $\kappa$ or the discretisation index $\delta$.
Furthermore, $C \gtrsim D$ is defined as $D \lesssim C$, and $C\eqsim D$ as $C\lesssim D$ and $C \gtrsim D$.

Let $\Omega\subset\mathbb{R}^n$ be an open bounded set. Let $\mathcal{T}^\delta$ be a conforming partition of $\Omega$ into $n$-simplices. For such a partition we will write $\mathcal{N}^\delta$ for the set of vertices. We write $S_p^{-1}(\mathcal{T}^\delta):=\{u\in L_2(\Omega)\colon u|_K\in P_p(K)\}$ and $S_p^0(\mathcal{T}^\delta) = S_p^{-1}(\mathcal{T}^\delta)\cap H^1(\Omega)$. The well-known Raviart-Thomas finite element space of order $p$ on $\mathcal{T}^\delta$ is denoted by $\RT_p(\mathcal{T}^\delta)$. 

\section{Helmholtz equation} \label{chap4:sec:helm}
We recall the Helmholtz equation given in \eqref{chap4:eq:helmholtz}. For $\Omega\subset \R^d$ being a bounded Lipschitz domain, the Helmholtz equation with (mixed) Dirichlet, Neumann and/or Robin boundary conditions consists of finding $\phi\in H^1(\Omega)$ that satisfies
\be
\begin{aligned}\label{chap4:eq:helmholtz-scaled}
-\Delta \phi-\kappa^2 \phi &=  \kappa^2 f & &\text{on } \Omega,\\
\phi & =  \kappa g_D \quad & &\text{on } \Gamma_D,\\
\tfrac{\partial \phi}{\partial \vec{n}} & = \kappa^2 g \quad & &\text{on } \Gamma_N,\\
\tfrac{\partial \phi}{\partial \vec{n}}- i \kappa \phi&=  \kappa^2 g \quad  & &\text{on } \Gamma_R,
\end{aligned} 
\ee
where we added some harmless scalings on the righthand sides that are made for convenience.
We assume that the wave number $\kappa$ is real and positive, and the boundary $\partial\Omega$ consists of three disjoint components $\Gamma_D, \Gamma_N$ and $\Gamma_R$ with $|\Gamma_R|>0$.

Here we assume that $f \in H^1_{0,\Gamma_D}(\Omega)'$, $g_D \in H^{\frac12}(\Gamma_D)$,
and $g \in H^{-\frac12}(\Gamma_N \cup \Gamma_R)$ (=$H_{00}^{\frac12}(\Gamma_N \cup \Gamma_R)'$). 
\subsection{Ultra-weak first order formulation}
To arrive at a first order formulation we first decompose $f \in  H^1_{0,\Gamma_D}(\Omega)'$ using the Riesz' representation theorem. For some $f_1 \in L_2(\Omega)$ and $\vec{f}_2 \in L_2(\Omega)^d$, we write 
$$
f(\eta)=\int_\Omega f_1 \overline{\eta} + \tfrac{1}{\kappa} \vec{f}_2 \cdot \nabla \overline{\eta}\,dx \quad (\eta \in H^1_{0,\Gamma_D}(\Omega)).
$$
Then introducing $\vec{u}=\tfrac{1}{\kappa} \nabla \phi - \vec{f}_2$ we rewrite \eqref{chap4:eq:helmholtz-scaled} as a \emph{first order system}
\be \label{chap4:1storder}
\begin{aligned}
-\tfrac{1}{\kappa} \divv \vec{u}- \phi&= f_1 & &\text{on } \Omega,\\
\tfrac{1}{\kappa} \nabla \phi- \vec{u}&=\vec{f}_2 & &\text{on } \Omega,\\
\phi & = \kappa g_D \quad  & &\text{on } \Gamma_D,\\
\vec{u} \cdot \vec{n} & = \kappa g & &\text{on } \Gamma_N,\\
\vec{u} \cdot \vec{n} - i \phi&=\kappa g \quad & &\text{on } \Gamma_R.
\end{aligned}
\ee
The ultra-weak first order formulation is subsequently obtained by moving all the derivatives to some test functions. These test functions are $\eta$ and $\vec{v}$ with 
$\eta=0$ on $\Gamma_D$, $\vec{v}\cdot \vec{n}=0$ on $\Gamma_N$, and
$\vec{v}\cdot\vec{n} + i \eta =0$ on $\Gamma_R$, with which we test the first and second equation respectively. After integration-by-parts and substituting the boundary conditions we arrive at the \emph{ultra-weak variational formulation}, in which all the boundary conditions are \emph{natural}:
\be \label{chap4:eq:ultra-weak}
\begin{split}
\big(B_\kappa&(\phi,\vec{u})\big)(\eta,\vec{v}):=
\int_\Omega  \tfrac{1}{\kappa} \vec{u}\cdot\nabla \overline{\eta} -  \phi \overline{\eta}- \tfrac{1}{\kappa} \phi \divv \overline{\vec{v}}- \vec{u}\cdot \overline{\vec{v}}\,dx\\
&=\int_\Omega f_1 \overline{\eta}+\vec{f}_2 \cdot \overline{\vec{v}}\,dx
-\int_{\Gamma_D} g_D \overline{\vec{v}}\cdot\vec{n}  \,ds+ \int_{\Gamma_N \cup \Gamma_R} g \overline{\eta} \,ds=:q(\eta,\vec{v}).
\end{split}
\ee
It was shown in \cite{204.18} that $B_\kappa$ is a boundedly invertible mapping from $U$ to $V$ defined below. 
\begin{theorem}
For 
\begin{align*}
&U:=L_2(\Omega) \times L_2(\Omega)^d,
\intertext{and}
&V:=\Big\{(\eta,\vec{v}) \in H_{0,\Gamma_D}^1(\Omega) \times H(\divv;\Omega)\colon \\
&\hspace*{9em}\int_{\partial\Omega} \vec{v}\cdot\vec{n} \,\overline{\psi} \,ds + i \int_{\Gamma_R} \eta \overline{\psi} \,ds=0 \quad (\psi\in H_{0,\Gamma_D}^1(\Omega))\Big\},
\end{align*}%
both being Hilbert spaces equipped with their canonical norms $\|\cdot\|_U:=\|\cdot\|_{L_2(\Omega) \times L_2(\Omega)^d}$ and $\|\cdot\|_V:=\|\cdot\|_{H^1(\Omega) \times H(\divv;\Omega)}$, it holds that $B_\kappa \in \Lis(U,V')$.
\end{theorem}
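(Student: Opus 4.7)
The plan is to work through the Banach adjoint. Reading off the first-order differential operator hidden in~\eqref{chap4:eq:ultra-weak}, one has the formal identity
\[
B_\kappa(\phi,\vec u)(\eta,\vec v)=\bigl\langle(\phi,\vec u),\,B_\kappa'(\eta,\vec v)\bigr\rangle_U,\qquad B_\kappa'(\eta,\vec v):=\bigl(-\eta-\tfrac{1}{\kappa}\divv\vec v,\ \tfrac{1}{\kappa}\nabla\eta-\vec v\bigr),
\]
and the integral constraint defining $V$ is precisely what is needed so that the boundary terms arising from integration by parts vanish for every $(\eta,\vec v)\in V$. Since $U$ and $V$ are Hilbert, the assertion $B_\kappa\in\Lis(U,V')$ is equivalent to $B_\kappa'\in\Lis(V,U)$; boundedness of $B_\kappa'$ is immediate from the definitions of $U$ and $V$.

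To establish bijectivity, I would show that for every $(\phi,\vec u)\in U$ the system $B_\kappa'(\eta,\vec v)=(\phi,\vec u)$ has a unique solution $(\eta,\vec v)\in V$. The second coordinate forces $\vec v=\tfrac{1}{\kappa}\nabla\eta-\vec u$, and substituting into the first (and multiplying by $\kappa^2$) yields the adjoint Helmholtz equation
\[
-\Delta\eta-\kappa^2\eta=\kappa^2\phi-\kappa\divv\vec u\quad\text{in }\Omega.
\]
Rather than splitting this data, which is awkward because $\vec u\in L_2(\Omega)^d$ carries no intrinsic normal trace, I would pose the scalar problem weakly from the outset: find $\eta\in H^1_{0,\Gamma_D}(\Omega)$ with
\[
\int_\Omega\nabla\eta\cdot\nabla\bar\psi-\kappa^2\eta\bar\psi\,dx\;-\;i\kappa\int_{\Gamma_R}\eta\bar\psi\,ds\;=\;\int_\Omega\kappa^2\phi\bar\psi+\kappa\,\vec u\cdot\nabla\bar\psi\,dx
\]
for all $\psi\in H^1_{0,\Gamma_D}(\Omega)$. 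This is a standard adjoint Helmholtz problem: the bilinear form on the left is a compact perturbation of the Riesz map on $H^1_{0,\Gamma_D}(\Omega)$, so the Fredholm alternative applies, and a multiplier/energy argument exploiting $|\Gamma_R|>0$ kills any nontrivial homogeneous solution, giving well-posedness for each real $\kappa>0$.

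Having obtained $\eta$, setting $\vec v=\tfrac{1}{\kappa}\nabla\eta-\vec u\in L_2(\Omega)^d$ and using the first coordinate equation identifies $\divv\vec v=-\kappa(\phi+\eta)\in L_2(\Omega)$, so $\vec v\in H(\divv;\Omega)$. An integration-by-parts calculation shows that the variational identity above is equivalent to the integral side condition defining $V$, so $(\eta,\vec v)\in V$. Injectivity of $B_\kappa'$ follows by the same reduction applied to the homogeneous problem, which forces $\eta=0$ and thus $\vec v=0$. Combined with boundedness, the open mapping theorem then yields $B_\kappa'\in\Lis(V,U)$, equivalently $B_\kappa\in\Lis(U,V')$.

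The main obstacle is precisely this bookkeeping of traces: the single integral identity defining $V$ bundles the Neumann condition on $\Gamma_N$ and the adjoint Robin condition on $\Gamma_R$ simultaneously, in the dual regularity that matches $H(\divv)$-normal traces. Phrasing the scalar adjoint problem directly as the weak equation above sidesteps any need to assign a trace to $\vec u$, and reduces the verification of the $V$-membership to a one-line application of the $H(\divv)$ integration-by-parts formula.
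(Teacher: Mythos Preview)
The paper does not prove this theorem here; it only states it and refers to \cite{204.18}. Your strategy---passing to the adjoint $B_\kappa'\in\Lis(V,U)$, eliminating $\vec v$ via $\vec v=\tfrac{1}{\kappa}\nabla\eta-\vec u$, and reducing to a scalar adjoint Helmholtz problem for $\eta$ posed weakly so that no trace of $\vec u$ is needed---is the natural route and is sound in outline.

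Two points need tightening. First, the boundary term in your variational problem carries the wrong sign. Carrying out the $H(\divv)$ integration-by-parts on the constraint defining $V$, with $\vec v=\tfrac{1}{\kappa}\nabla\eta-\vec u$ and $\divv\vec v=-\kappa(\phi+\eta)$ already inserted, gives
\[
\int_\Omega\nabla\eta\cdot\nabla\bar\psi-\kappa^2\eta\bar\psi\,dx\;+\;i\kappa\int_{\Gamma_R}\eta\bar\psi\,ds\;=\;\int_\Omega\kappa^2\phi\bar\psi+\kappa\,\vec u\cdot\nabla\bar\psi\,dx,
\]
i.e.\ $+i\kappa$ rather than $-i\kappa$, consistent with the adjoint Robin condition $\vec v\cdot\vec n+i\eta=0$ encoded in $V$. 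Either sign yields a well-posed scalar problem, but only the correct one makes your ``$V$-membership via one-line integration-by-parts'' step go through; with the wrong sign the pair $(\eta,\vec v)$ you construct lands outside $V$.

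Second, ``a multiplier/energy argument exploiting $|\Gamma_R|>0$ kills any nontrivial homogeneous solution'' is not quite the right mechanism. Testing with $\psi=\eta$ and taking the imaginary part only yields $\eta|_{\Gamma_R}=0$, and then the Robin relation gives $\partial_n\eta|_{\Gamma_R}=0$; to conclude $\eta\equiv0$ on all of $\Omega$ you need the unique continuation principle for $-\Delta-\kappa^2$, which is a genuine analytic input and should be named explicitly. Rellich-type multiplier identities deliver quantitative stability but require geometric hypotheses (star-shapedness, $\Gamma_R=\partial\Omega$) that are not assumed here, so they are not what closes the uniqueness step in this generality.
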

\begin{remark}\label{chap4:remark:robin-discrete}
    For $(\eta^\delta, \vec{v}^\delta)\in V^\delta = \cS_{\tilde{p}}^0(\tria^\delta)\times\RT_{\tilde{p}}(\tria^\delta)$ to be a member of $V$, it simply means that $\vec{v}^\delta\cdot \vec{n}=0$ on $\Gamma_N$, $\eta^\delta = 0$ on $\Gamma_D$ and $\vec{v}^\delta\cdot\vec{n} + i\eta^\delta = 0$ on $\Gamma_R$. By applying standard bases for $\cS^0_{\tilde{p}}(\tria^\delta)$ and $\RT_{\tilde{p}}(\tria^\delta)$, a basis for $V^\delta$ is obtained when one removes the usual DoFs of $\RT_{\tilde{p}}(\tria^\delta)$ associated to element faces on $\Gamma_N$ and the DoFs of $S_{\tilde{p}}^0(\mathcal{T}^\delta)$ associated to the nodes on $\Gamma_D$, and one eliminates the DoFs of $\RT_{\tilde{p}}(\tria^\delta)$ associated to element faces on $\Gamma_N \cup \Gamma_R$ by imposing $\vec{v}\cdot\vec{n}=- i \eta$ with $\eta\in S_{\tilde{p}}^0(\mathcal{T}^\delta)$.

\end{remark}

\subsection{Pollution-free least squares approximations}\label{chap4:section: optimal test norm}
The least squares method approximates the solution of the system $B_\kappa(\mathbbm{u}) = B_\kappa(\phi,\vec{u})=q$ by minimizing the residual over some finite-dimensional subspace $U^\delta \subset U$, i.e. one defines
$$
\bar{\mathbbm{u}}^{\delta} := \argmin_{\mathbbm{w}^\delta \in U^\delta} \|q-B_\kappa \mathbbm{w}^\delta\|_{V'}.
$$ 

However, this approach is not attractive on its own. We only have available the estimate $$\|\mathbbm{u} - \bar{\mathbbm{u}}^\delta\|_U\leq \|B_\kappa\|_{U\to V'}\|B_\kappa^{-1}\|_{V'\to U} \min_{\mathbbm{w}^\delta\in U^\delta}\|\mathbbm{u} - \mathbbm{w}^\delta\|_U,$$
whereas the condition number $\|B_\kappa\|_{U\to V'}\|B_\kappa^{-1}\|_{V'\to U}$ cannot be expected to be small for large $\kappa$.

To circumvent the issue of large condition numbers, we replace the norm on the space $V$ with the $\kappa$-dependent \emph{optimal test norm} $\|\cdot\|_{V_\kappa}$, and the norm of the dual space $V'$ with the induced dual norm $$\|\cdot\|_{V_\kappa'}:=\sup_{0\not =v\in V}\tfrac{\cdot(v)}{\|v\|_{V_\kappa}}.$$

This \emph{optimal test norm} is given by
\be \label{chap4:eq:optimal test norm}
\|\mathbbm{v}\|_{V_{\kappa}}:= \|B'_\kappa \mathbbm{v}\|_{U'} = \|B'_\kappa \mathbbm{v}\|_{U},
\ee
where we use that $U=L_2(\Omega)^{d+1}\simeq U'$,
and the corresponding inner product is given by $\langle\cdot,\cdot\rangle_{V_{\kappa}} = \langle B_\kappa '\cdot, B_\kappa '\cdot \rangle_U$. 
The norm $\|\cdot\|_{V_\kappa}$, is chosen in such a way that the operator $B_\kappa$ becomes an \emph{isometry}, i.e. $\|B_\kappa \mathbbm{u}\|_{V_{\kappa}'}=\|\mathbbm{u}\|_U$. In other words, with the optimal test norm, the operator $B_\kappa$ has a condition number equal to one.

Consequently, for any $q \in V'$, and any closed, subspace $\{0\} \subsetneq U^\delta \subset U$, the least squares solution
\be \label{chap4:ls}
\hat{\mathbbm{u}}^{\delta}:=\argmin_{\mathbbm{w}^\delta \in U^\delta} \|q- B_\kappa \mathbbm{w}^\delta\|_{V_{\kappa}'}
\ee
is the \emph{best} approximation to $\mathbbm{u}$ from $U^\delta$ w.r.t. $\|\cdot\|_U$. 

To emphasize the use of the $\kappa$-dependent optimal test norm, from now on we will write $V_\kappa$ for the space $V$ equipped with the $\kappa$-dependent optimal test norm and write $V_\kappa'$ for its dual, which is equipped with the norm $\|\cdot \|_{V_\kappa'}$.
With $R_\kappa\colon V_\kappa'\to V_\kappa$ being the Riesz lifting operator defined by $\mathbbm{f}(\mathbbm{v}) = \langle R_\kappa \mathbbm{f}, \mathbbm{v}\rangle_{V_\kappa}$, $(\mathbbm{f}\in V', \mathbbm{v}\in V)$, the corresponding dual inner product is given by $\langle\,\mathbbm{f},\mathbbm{q}\,\rangle_{V_{\kappa}'}:=\langle R_\kappa\mathbbm{f}, R_\kappa\mathbbm{q}\rangle_{V_\kappa}$. 
\subsubsection{'Practical' method}
The solution $\hat{\mathbbm{u}}^{\delta}$ to \eqref{chap4:ls} is not computable because we cannot evaluate the dual norm ${\|\cdot \|_{V_\kappa'}}$. To deal with this problem, we replace $V_\kappa$ with a finite-dimensional subspace $V_\kappa^\delta \subset V_\kappa$ that satisfies
\begin{align}\label{chap4:inf-sup definition}
\gamma^\delta_\kappa:=\inf_{0 \neq \mathbbm{u}^\delta \in U^\delta} \sup_{0 \neq \mathbbm{v}^\delta \in V_\kappa^\delta} \frac{|
(B_\kappa \mathbbm{u}^\delta)(\mathbbm{v}^\delta)|}{\|\mathbbm{u}^\delta\|_U\|\mathbbm{v}^\delta\|_{V_{\kappa}}} > 0,
\end{align}
and instead solve the \emph{practical} least squares problem
\be \label{chap4:ls-discrete}
\mathbbm{u}^{\delta}:=\argmin_{\mathbbm{w}^\delta \in U^\delta} \sup_{\undertilde{\mathbbm{v}}^\delta\in V_\kappa^\delta} \frac{|(q- B_\kappa \mathbbm{w}^\delta)(\undertilde{\mathbbm{v}}^\delta)|}{\|\undertilde{\mathbbm{v}}^\delta\|_{V_{\kappa}}} = \argmin_{\mathbbm{w}^\delta \in U^\delta}\|q-B_\kappa \mathbbm{w}^\delta\|_{{V_{\kappa}^\delta}'}.
\ee
The effect of this discretization of the test space is characterized by the theorem below (see \cite{35.8565,204.18}).
A consequence of this theorem is that if we choose the test space $V_{\kappa}^\delta\subset V$ large enough w.r.t. the trial space $U^\delta\subset U$, the solution $\mathbbm{u}^\delta$ is a \emph{quasi}-best approximation to $\mathbbm{u}$ from $U^\delta$ also known as a \emph{pollution-free} approximation.
\begin{theorem} \label{chap4:theorem:quasi-optimality}
If $\gamma^\delta_\kappa>0$, then for every $q\in V_\kappa'$, the system \eqref{chap4:ls-discrete} has a unique solution and 
$$
\sup_{\mathbbm{u} \in U\setminus U^\delta}
\frac{
\|\mathbbm{u}-\mathbbm{u}^\delta\|_U
}{
\inf_{\undertilde{\mathbbm{u}}^\delta \in U^\delta} \|\mathbbm{u}-\undertilde{\mathbbm{u}}^\delta\|_U
}= \frac{1}{\gamma^\delta_\kappa},
$$
i.e. $ \frac{1}{\gamma^\delta_\kappa}$ is the worst possible `pollution factor'.
\end{theorem}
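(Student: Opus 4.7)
My plan is to recast the minimization \eqref{chap4:ls-discrete} as a bounded linear projection on $U$, compute its operator norm, and then read off the sharp constant via the Kato/Xu--Zikatanov identity for projections on a Hilbert space. The first step is to reformulate everything inside $V_\kappa$: since by construction $B_\kappa \in \Lis(U,V_\kappa')$ is an isometry, $J := R_\kappa B_\kappa$ is an isometric isomorphism $U \to V_\kappa$; writing $P^\delta$ for the $V_\kappa$-orthogonal projection onto $V_\kappa^\delta$, a short Riesz computation yields $\|\mathbbm{f}\|_{V_\kappa^{\delta'}} = \|P^\delta R_\kappa \mathbbm{f}\|_{V_\kappa}$ for every $\mathbbm{f} \in V_\kappa'$. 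Setting $T^\delta := P^\delta J \colon U \to V_\kappa^\delta$, the objective in \eqref{chap4:ls-discrete} is therefore $\|T^\delta(\mathbbm{u}-\mathbbm{w}^\delta)\|_{V_\kappa}$, and $\gamma^\delta_\kappa = \inf_{0 \neq \mathbbm{u}^\delta \in U^\delta}\|T^\delta \mathbbm{u}^\delta\|_{V_\kappa}/\|\mathbbm{u}^\delta\|_U$.

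Next I would establish well-posedness and identify the solution operator. The hypothesis $\gamma^\delta_\kappa > 0$ is equivalent to $T^\delta|_{U^\delta}$ being injective; since $\dim U^\delta < \infty$, it is then bijective onto $T^\delta(U^\delta)$, and the unique minimizer reads $\mathbbm{u}^\delta = (T^\delta|_{U^\delta})^{-1}\Pi\,T^\delta \mathbbm{u}$, with $\Pi$ the $V_\kappa$-orthogonal projection onto $T^\delta(U^\delta)$. The induced solution map $Q^\delta \mathbbm{u} := \mathbbm{u}^\delta$ is linear and restricts to $\identity$ on $U^\delta$, so it is a bounded linear projection onto $U^\delta$.

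I would then compute $\|Q^\delta\|_{U \to U} = 1/\gamma^\delta_\kappa$. The inequality $\leq$ follows by concatenating $\|(T^\delta|_{U^\delta})^{-1}\|=1/\gamma^\delta_\kappa$, $\|\Pi\|=1$, and $\|T^\delta\|\leq 1$ (the latter since $P^\delta$ is orthogonal and $J$ isometric). For $\geq$, I pick $\mathbbm{u}_0^\delta \in U^\delta\setminus\{0\}$ realising the (finite-dimensional) infimum in $\gamma^\delta_\kappa$ and set $\mathbbm{u} := J^{-1}T^\delta \mathbbm{u}_0^\delta$: then $T^\delta \mathbbm{u} = T^\delta \mathbbm{u}_0^\delta \in T^\delta(U^\delta)$, giving $Q^\delta \mathbbm{u} = \mathbbm{u}_0^\delta$, while $\|\mathbbm{u}\|_U = \|T^\delta \mathbbm{u}_0^\delta\|_{V_\kappa} = \gamma^\delta_\kappa \|\mathbbm{u}_0^\delta\|_U$, so $\|Q^\delta \mathbbm{u}\|_U/\|\mathbbm{u}\|_U = 1/\gamma^\delta_\kappa$.

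Finally, to turn $\|Q^\delta\|$ into the statement of the theorem I would observe that $(I-Q^\delta)\mathbbm{u} = (I-Q^\delta)(\mathbbm{u}-\undertilde{\mathbbm{u}}^\delta)$ for any $\undertilde{\mathbbm{u}}^\delta \in U^\delta$, and that, since $\ker(I-Q^\delta) = U^\delta$, the worst-case ratio $\|(I-Q^\delta)\mathbbm{u}\|_U/\|\mathbbm{u}\|_U$ is attained on $(U^\delta)^\perp$; these together identify the supremum in the theorem with $\|I-Q^\delta\|_{U\to U}$. The proof is then closed by the Kato/Xu--Zikatanov identity $\|I-Q^\delta\| = \|Q^\delta\|$, which holds for every non-trivial bounded projection on a Hilbert space. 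I expect the main obstacle to be securing the sharp equality $\|Q^\delta\| = 1/\gamma^\delta_\kappa$ via an explicit extremal construction (rather than settling for a one-sided Céa-type estimate), together with the clean identification of $\|\cdot\|_{V_\kappa^{\delta'}}$ as a projected $V_\kappa$-norm; once these are in place, the remainder is standard Hilbert-space machinery.
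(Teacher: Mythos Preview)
Your argument is correct: the identification $\|q-B_\kappa\mathbbm{w}^\delta\|_{{V_\kappa^\delta}'}=\|P^\delta R_\kappa(q-B_\kappa\mathbbm{w}^\delta)\|_{V_\kappa}$, the resulting description of $\mathbbm{u}^\delta=Q^\delta\mathbbm{u}$ as a bounded projection onto $U^\delta$, the two-sided computation $\|Q^\delta\|=1/\gamma^\delta_\kappa$ (with the extremal vector $\mathbbm{u}=J^{-1}T^\delta\mathbbm{u}_0^\delta$), and the closing step via the Kato/Xu--Zikatanov identity $\|I-Q^\delta\|=\|Q^\delta\|$ are all sound. The one implicit point worth making explicit is that the infimum defining $\gamma^\delta_\kappa$ is attained (compactness of the unit sphere in the finite-dimensional $U^\delta$), and that $Q^\delta$ is a \emph{nontrivial} projection ($\{0\}\subsetneq U^\delta\subsetneq U$), which the Kato identity requires.

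The paper itself does not prove this theorem; it only cites \cite{35.8565,204.18}. The route you take---recasting the practical least-squares solution as an oblique projection and invoking $\|I-Q\|=\|Q\|$---is precisely the standard mechanism behind such sharp quasi-optimality statements and is the argument used in the cited references, so your proposal is in line with what the paper defers to.
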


Of course, the question remains how to choose the test space $V_\kappa^\delta$ such that the pollution factor is bounded uniformly in $\delta$ and $\kappa$. In \cite{204.18}, for convex domains $\Omega$ with $\Gamma_R = \partial \Omega$ and quasi-uniform meshes, choices of $U^\delta$ and $V_\kappa^\delta$ were found under which the pollution factor $\gamma_\kappa^\delta$ is bounded away from zero uniformly in the mesh-size and wave number $\kappa$. If $U^\delta = \cS_p^{-1}(\tria^\delta)^{d+1}$, it suffices to choose the test space $V_{\kappa}^\delta:=(\cS_{\tilde{p}}^0(\tilde{\tria}^\delta) \times \RT_{\tilde{p}}(\tilde{\tria}^\delta)) \cap V$, where $\tilde{\tria}^\delta$ is a slightly refined mesh w.r.t. $\tria^\delta$, and $\tfrac{\log \kappa}{\tilde{p}}$ and $\tfrac{p^2}{\tilde{p}}$ are sufficiently small. 

The results obtained in \cite{204.18} rely on results from \cite{202.8} concerning the approximability of the solutions of adjoint Helmholtz problems by finite element functions. A key ingredient in their analysis is the assumption that a norm of the solution operator for the Helmholtz problem satisfies a polynomial-in-$\kappa$ bound.
Such bounds have been established for some classes of Helmholtz problems in \cite{MR2352336,MR2194984}. These results, however, are not available when $\Gamma_R\not =\partial \Omega$.

In principle, to obtain sufficiently large inf-sup constants, either one could define the test space on a mesh that is refined with respect to $\tria^\delta$ or one could increase the polynomial order $\tilde{p}$ of the test space. The first option is not as attractive from an implementation point of view, which is why we opt for the second option. In Section \ref{chap4:sec:numerics} we perform a numerical investigation concerning a satisfactory choice of $\tilde{p}$. For the more challenging cases where $\Omega$ is not convex and $\Gamma_R \not = \partial \Omega$, we numerically observe that quasi-optimality of the numerical approximation can still be achieved by choosing $\tilde{p}$ large enough. However, the dependence of $\tilde{p}$ on the wave number and $p$ can be worse. 

In the remainder of this article we set 
\be \label{chap4:FEspace}
\begin{aligned}
U^\delta = &\mathcal{S}_p^{0}(\mathcal{T}^\delta)^{d+1},\\
   V_{\kappa}^\delta=(\cS_{\tilde{p}}^0(&\tria^\delta) \times \RT_{\tilde{p}}(\tria^\delta)) \cap V_\kappa.
\end{aligned}
\ee
\begin{remark}
Instead of choosing a continuous trial space one could choose $U^\delta=\mathcal{S}_p^{-1}(\mathcal{T}^\delta)^{d+1}$, i.e. $U^\delta$ is \emph{discontinuous} across edges/faces. The reason we opt otherwise is two-fold. Firstly, the approximation quality is the same for both continuous and discontinuous spaces, but the inf-sup constant $\gamma_\kappa^\delta$ is always larger for continuous trial spaces. Secondly, the boosted method and the error estimator, which will both be introduced in Section \ref{chap4:sec:errorestimation}, perform better in case of continuous trial spaces, see Remark \ref{chap4:continuousvsdiscontinuous}.
\end{remark}

\subsection{Euler-Lagrange equations}
Finally, the formulation we use in computations arises when considering the Euler-Lagrange equations of \eqref{chap4:ls-discrete}. These equations read as finding $\mathbbm{u}^\delta\in U^\delta$ that satisfies
\begin{align}\label{chap4:eq:euler-lagrange}
    \langle q-B_\kappa \mathbbm{u}^\delta, B_\kappa\undertilde{\mathbbm{w}}^\delta\rangle_{{V_\kappa^\delta}'} = 0, \mbox{ for any } \undertilde{\mathbbm{w}}^\delta\in U^\delta. 
\end{align} 
Because we cannot easily evaluate the dual inner product $\langle \cdot,\cdot \rangle_{{V_{\kappa}^\delta}'}$, we introduce a new variable $\mathbbm{v}^\delta\in  V_\kappa^\delta$ which is the Riesz-lift of $\mathbbm{q}-B_\kappa \mathbbm{u}^\delta\in (V_\kappa^\delta)'$, i.e.
\begin{align}\label{chap4:A}
   \langle \mathbbm{v}^\delta, \undertilde{\mathbbm{v}}^\delta\rangle_{V_\kappa} = \langle B_\kappa'\mathbbm{v}^\delta, B_\kappa'  \undertilde{\mathbbm{v}}^\delta\rangle_U = (q-B_\kappa \mathbbm{u}^\delta)(\undertilde{\mathbbm{v}}^\delta)\mbox{ for }\undertilde{\mathbbm{v}}^\delta \in V_{\kappa}^\delta. 
\end{align}
Now letting $R_\delta\in \Lis({V_\kappa^\delta}', V_\kappa^\delta)$ be the Riesz lifting operator, defined by $\langle R_\delta \mathbbm{f}, \undertilde{\mathbbm{v}}^\delta\rangle_{V_\kappa} = \mathbbm{f}(\undertilde{\mathbbm{v}}^\delta)$ for $\undertilde{\mathbbm{v}}^\delta \in V_\kappa^\delta$, we find that 
\begin{align}\begin{split}\label{chap4:B}
\langle B_\kappa'\mathbbm{v}^\delta,\undertilde{\mathbbm{u}}^\delta\rangle_U &= (B_\kappa \undertilde{\mathbbm{u}}^\delta)(\mathbbm{v}^\delta) \\
	&= \langle R_\delta B_\kappa \undertilde{\mathbbm{u}}^\delta, \mathbbm{v}^\delta\rangle_{V_\kappa}\\
	&=\overline{ (q-B_\kappa\mathbbm{u}^\delta)(R_\delta B_\kappa \undertilde{\mathbbm{u}}^\delta)}\\
	&= \overline{\langle  (q-B_\kappa \mathbbm{u}^\delta), B_\kappa \undertilde{\mathbbm{u}}^\delta\rangle_{{V_\kappa^\delta}'}}\\
	&= 0,
    \end{split}
\end{align}
thanks to $\eqref{chap4:eq:euler-lagrange}$. 

Putting \eqref{chap4:A} and \eqref{chap4:B} together, we conclude that the pair $(\mathbbm{v}^\delta,\mathbbm{u}^{\delta})\in V_\kappa^\delta\times U^\delta$ solves the saddle-point system
\be \label{chap4:saddle-discrete}
\left\{\hspace*{-0.5em}
\begin{array}{lcll}
\langle B_\kappa'\mathbbm{v}^\delta, B_\kappa'\undertilde{\mathbbm{v}}^\delta\rangle_U +
\langle \mathbbm{u}^{\delta}, B_\kappa'\undertilde{\mathbbm{v}}^\delta\rangle_U 
& \!\!= \!\!& q(\undertilde{\mathbbm{v}}^\delta) & (\undertilde{\mathbbm{v}}^\delta \in V_\kappa^\delta),\\
\langle B_\kappa'\mathbbm{v}^\delta,\undertilde{\mathbbm{u}}^\delta\rangle_U
& \!\!=\!\! & 0 & (\undertilde{\mathbbm{u}}^\delta \in U^\delta).
\end{array}
\right.\hspace*{-1.2em}
\ee

Section \ref{chap4:sec:Iterative solvers} is devoted to solving this saddle-point system.
\subsection{A posterior error estimation and boosted approximation.} \label{chap4:sec:errorestimation}
Even though the function $\mathbbm{v}^\delta$ is not of main interest, it can be used to improve the solution $\mathbbm{u}^\delta$ and to estimate the error of the approximation. The theorem below states that the 'boosted' FOSLS approximation $(\phi_{\rm bst}^\delta,\vec{u}_{\rm bst}^\delta):=\mathbbm{u}^\delta + B_\kappa'\mathbbm{v}^\delta$ has at least the same quality as $\mathbbm{u}^\delta$, and that the \emph{error estimator} $\|B_\kappa'\mathbbm{v}^\delta\|_U$ provides a lower bound for the error $\|\mathbbm{u}-\mathbbm{u}^\delta\|_U$.
\begin{theorem}[\cite{204.18}]\label{chap4:thm:boosted} It holds that
\be \label{chap4:201}
\|\mathbbm{u}-\mathbbm{u}^\delta\|_U^2 = \|\mathbbm{u}-(\mathbbm{u}^\delta+B_\kappa'\mathbbm{v}^\delta)\|_U^2+\|B_\kappa'\mathbbm{v}^\delta\|_U^2,
\ee
which implies $$\|B_\kappa'\mathbbm{v}^\delta\|_U^2\leq\|\mathbbm{u}-\mathbbm{u}^\delta\|_U^2.$$
Furthermore, we have the estimate
$$
\|\mathbbm{u}-(\mathbbm{u}^\delta+B_\kappa'\mathbbm{v}^\delta)\|_U \leq \frac{1}{\gamma_\kappa^\delta} \,\,\inf_{\mathbbm{w}^\delta \in U^\delta+ (B_\kappa' V^\delta_{\kappa} \cap (U^\delta)^{\perp})} \|\mathbbm{u}-\mathbbm{w}^\delta\|_U.
$$
\end{theorem}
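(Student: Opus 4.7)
My plan is to first prove the Pythagorean identity \eqref{chap4:201} by identifying $B_\kappa' \mathbbm{v}^\delta$ as an $L_2$-orthogonal projection, and then to establish the quasi-best bound for $\hat{\mathbbm{u}}^\delta := \mathbbm{u}^\delta + B_\kappa' \mathbbm{v}^\delta$ by interpreting it as the practical FOSLS solution on a suitably enlarged trial space, so that Theorem~\ref{chap4:theorem:quasi-optimality} can be invoked directly.

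For the first part, I would use that $\mathbbm{u}$ solves $B_\kappa \mathbbm{u} = q$, so $q(\undertilde{\mathbbm{v}}^\delta) = \langle \mathbbm{u}, B_\kappa' \undertilde{\mathbbm{v}}^\delta\rangle_U$ for every $\undertilde{\mathbbm{v}}^\delta \in V_\kappa^\delta$. Substituting this into the first equation of \eqref{chap4:saddle-discrete} yields
\begin{equation*}
\langle B_\kappa' \mathbbm{v}^\delta, B_\kappa' \undertilde{\mathbbm{v}}^\delta\rangle_U = \langle \mathbbm{u} - \mathbbm{u}^\delta, B_\kappa' \undertilde{\mathbbm{v}}^\delta\rangle_U \quad (\undertilde{\mathbbm{v}}^\delta \in V_\kappa^\delta),
\end{equation*}
which identifies $B_\kappa' \mathbbm{v}^\delta$ as the $U$-orthogonal projection of $\mathbbm{u} - \mathbbm{u}^\delta$ onto the closed subspace $B_\kappa' V_\kappa^\delta \subset U$. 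Pythagoras applied to $\mathbbm{u} - \mathbbm{u}^\delta = (\mathbbm{u} - \hat{\mathbbm{u}}^\delta) + B_\kappa' \mathbbm{v}^\delta$ then gives \eqref{chap4:201}, from which the lower bound $\|B_\kappa' \mathbbm{v}^\delta\|_U \le \|\mathbbm{u} - \mathbbm{u}^\delta\|_U$ is immediate.

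For the quasi-best estimate I would introduce the enlarged trial space $\tilde U^\delta := U^\delta + (B_\kappa' V_\kappa^\delta \cap (U^\delta)^\perp)$, which is a $U$-orthogonal direct sum. The second equation of \eqref{chap4:saddle-discrete} shows $B_\kappa' \mathbbm{v}^\delta \in B_\kappa' V_\kappa^\delta \cap (U^\delta)^\perp$, so $\hat{\mathbbm{u}}^\delta \in \tilde U^\delta$; the first equation rearranges to $\langle \hat{\mathbbm{u}}^\delta, B_\kappa' \undertilde{\mathbbm{v}}^\delta\rangle_U = q(\undertilde{\mathbbm{v}}^\delta)$ for all $\undertilde{\mathbbm{v}}^\delta \in V_\kappa^\delta$. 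Hence $(0, \hat{\mathbbm{u}}^\delta) \in V_\kappa^\delta \times \tilde U^\delta$ satisfies the saddle-point system \eqref{chap4:saddle-discrete} with $U^\delta$ replaced by $\tilde U^\delta$, i.e., $\hat{\mathbbm{u}}^\delta$ is the practical FOSLS solution \eqref{chap4:ls-discrete} on the trial space $\tilde U^\delta$.

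The decisive remaining step is to bound the inf-sup constant $\tilde\gamma_\kappa^\delta$ of $\tilde U^\delta$ versus $V_\kappa^\delta$ from below by $\gamma_\kappa^\delta$. Writing the inner supremum in \eqref{chap4:inf-sup definition} as $\|P_{B_\kappa' V_\kappa^\delta}(\cdot)\|_U$ and decomposing $\tilde{\mathbbm{u}}^\delta = \undertilde{\mathbbm{u}}_1^\delta + \undertilde{\mathbbm{u}}_2^\delta \in \tilde U^\delta$ with $\undertilde{\mathbbm{u}}_1^\delta \in U^\delta$, $\undertilde{\mathbbm{u}}_2^\delta \in B_\kappa' V_\kappa^\delta \cap (U^\delta)^\perp$, one has $P_{B_\kappa' V_\kappa^\delta} \undertilde{\mathbbm{u}}_2^\delta = \undertilde{\mathbbm{u}}_2^\delta$, while the cross-term $\langle P_{B_\kappa' V_\kappa^\delta} \undertilde{\mathbbm{u}}_1^\delta, \undertilde{\mathbbm{u}}_2^\delta\rangle_U = \langle \undertilde{\mathbbm{u}}_1^\delta, \undertilde{\mathbbm{u}}_2^\delta\rangle_U$ vanishes by the orthogonality of $U^\delta$ and $(U^\delta)^\perp$. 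Combined with the inf-sup on $U^\delta$ and the Cauchy--Schwarz bound $\gamma_\kappa^\delta \le 1$, this yields
\begin{equation*}
\|P_{B_\kappa' V_\kappa^\delta} \tilde{\mathbbm{u}}^\delta\|_U^2 = \|P_{B_\kappa' V_\kappa^\delta} \undertilde{\mathbbm{u}}_1^\delta\|_U^2 + \|\undertilde{\mathbbm{u}}_2^\delta\|_U^2 \ge (\gamma_\kappa^\delta)^2 \|\tilde{\mathbbm{u}}^\delta\|_U^2,
\end{equation*}
so $\tilde\gamma_\kappa^\delta \ge \gamma_\kappa^\delta$, and Theorem~\ref{chap4:theorem:quasi-optimality} applied on $\tilde U^\delta$ delivers the desired bound. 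The main obstacle is exactly this inf-sup preservation: it is the intersection with $(U^\delta)^\perp$ in the definition of $\tilde U^\delta$ that makes the cross-term vanish and prevents the constant from deteriorating; replacing this by the larger space $B_\kappa' V_\kappa^\delta$ would let the trial space overlap $U^\delta$'s $B_\kappa'$-image too strongly and could collapse the inf-sup.
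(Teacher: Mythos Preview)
The paper does not actually prove this theorem; it is stated with a citation to \cite{204.18} and no argument is given here. So there is no ``paper's own proof'' to compare against, and I can only assess correctness.

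Your argument is correct in both parts. For the Pythagorean identity, identifying $B_\kappa'\mathbbm{v}^\delta$ as the $U$-orthogonal projection of $\mathbbm{u}-\mathbbm{u}^\delta$ onto $B_\kappa'V_\kappa^\delta$ via the first saddle-point equation is exactly the right observation. For the quasi-best bound, your strategy of recognising $\hat{\mathbbm{u}}^\delta$ as the practical least-squares solution on the enlarged trial space $\tilde U^\delta=U^\delta\oplus^{\perp}(B_\kappa'V_\kappa^\delta\cap(U^\delta)^\perp)$ is clean, and the verification that $(0,\hat{\mathbbm{u}}^\delta)$ solves the enlarged system is valid. The inf-sup preservation step is the only nontrivial point, and your computation handles it correctly: rewriting the inner supremum as $\|P_{B_\kappa'V_\kappa^\delta}\cdot\|_U$, using self-adjointness of $P$ together with $\undertilde{\mathbbm{u}}_2^\delta\in B_\kappa'V_\kappa^\delta$ to kill the cross-term, and then invoking $\gamma_\kappa^\delta\le1$ to pass from $(\gamma_\kappa^\delta)^2\|\undertilde{\mathbbm{u}}_1^\delta\|_U^2+\|\undertilde{\mathbbm{u}}_2^\delta\|_U^2$ to $(\gamma_\kappa^\delta)^2\|\tilde{\mathbbm{u}}^\delta\|_U^2$ all check out. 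Your closing remark about why the intersection with $(U^\delta)^\perp$ is essential is also on point.
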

The numerical experiments from \cite{204.18} suggest that the upper bound $\|u-\mathbbm{u}^\delta\|_U \lesssim \|B_\kappa'\mathbbm{v}^\delta\|_U$ also holds when the number of degrees of freedom in $V_\kappa^\delta$ per wavelength start to exceed 1, which makes the error estimator reliable \emph{and} efficient. Furthermore, since the error estimator can be easily localized into element-wise error indicators it can be used to drive an adaptive scheme with Dörfler marking.
\begin{remark}\label{chap4:continuousvsdiscontinuous}
    The space $U^\delta+ (B_\kappa' V^\delta_{\kappa} \cap (U^\delta)^{\perp})$ is hard to analyze. For $p=1, \tilde{p}=3$, one can observe improved convergence rates for the boosted method. Surprisingly, if we instead choose $U^\delta=\mathcal{S}_p^{-1}(\mathcal{T}^\delta)^{d+1}$, i.e. $U^\delta$ is \emph{discontinuous} across edges/faces, this improved convergence rate is lost, which is probably due to $(U^\delta)^\perp$ being quite small. For this reason we prefer continuous trial spaces over discontinuous trial spaces.
\end{remark}

\section{Iterative solvers}\label{chap4:sec:Iterative solvers}
In this section we investigate iterative solvers for \eqref{chap4:saddle-discrete}.
Recall the definition of $U^\delta$ and $V_\kappa^\delta$ in \eqref{chap4:FEspace}. We equip both spaces with the finite element bases $\Phi:=\{\varphi_1,\varphi_2,\ldots\}$ and $\Psi:=\{\psi_1,\psi_2,\ldots\}$, respectively as discussed in Remark \ref{chap4:remark:robin-discrete}. We define the matrices ${\bf M}^U$, ${\bf M}^{V_{\kappa}}$, ${\bf B}_\kappa$ by
${\bf M}^U_{i j}=\langle \varphi_j,\varphi_i \rangle_U$, 
${\bf M}^{V_{\kappa}}_{i j}=\langle B_\kappa' \psi_j,B_\kappa' \psi_i \rangle_U$,
$({\bf B}_\kappa)_{i j}=\langle \varphi_j , B_\kappa' \psi_i \rangle_U$. We define the vector ${\bf q}$ by ${\bf q}_i = q(\psi_i)$. The Schur complement is defined as ${\bf S}_\kappa := {\bf B}_\kappa^H({\bf M}^{V_{\kappa}})^{-1}{\bf B}_\kappa$.

Then \eqref{chap4:saddle-discrete} can be written as
\begin{align}\label{chap4:discrete}
{\bf K} = \begin{pmatrix}
{\bf M}^{V_{\kappa}} & {\bf B}_\kappa \\
{\bf B}_\kappa^H & {\bf 0}
\end{pmatrix} \begin{pmatrix} {\bf v} \\ {\bf u} \end{pmatrix}= \begin{pmatrix} {\bf q} \\ {\bf 0} \end{pmatrix},
\end{align}
which is a saddle-point system. For systems of this form, there exist excellent iterative solution methods. See for example the use of the preconditioned MINRES method \cite{MR383715}, BPCG method \cite{MR917816}, and the inexact Uzawa method \cite{MR1031439}, discussed in \cite{MINRES_UZAWA} in the context of the Stokes equation. A huge advantage of these methods is the minimal memory requirement. For example, the preconditioned MINRES method only requires to store a few vectors thanks to the three-term recurrence relation in the Lanczos algorithm. Furthermore, convergence of the MINRES method is guaranteed without the need to properly select parameters.

 These iterative solvers for saddle-point equations are accelerated by Hermitian positive definite preconditioners ${\bf Q}_S$, ${\bf Q}_{V_\kappa}$ for the Schur complement ${\bf S}_\kappa$ and the matrix ${\bf M}^{V_\kappa}$ respectively, that satisfy
\begin{equation}\label{chap4:spectral-equivalence}
\begin{aligned}
\gamma_{S_\kappa}{\bf Q}_S &\leq {\bf S}_\kappa \leq \Gamma_{S_\kappa} {\bf Q}_S\\
\gamma_{V_\kappa}{\bf Q}_{V_{\kappa}} &\leq {\bf M}^{V_\kappa} \leq \Gamma_{V_{\kappa}}{\bf Q}_{V_{\kappa}},
\end{aligned}
\end{equation}
or equivalently meaning that the spectrum of ${\bf Q}_S^{-1}{\bf S}_\kappa$ and ${\bf Q}_{V_\kappa}^{-1}{\bf M}^{V_\kappa}$ is contained in $[\gamma_{S_\kappa},\Gamma_{S_\kappa}]$ and $[\gamma_{V_\kappa}, \Gamma_{V_\kappa}]$, respectively.
The convergence rate of the preconditioned iterative methods depends on the above positive constants. In \cite{MINRES_UZAWA, book_Elman}, this dependence is studied in more depth. 

In the remainder of this section, we provide examples of these preconditioners for both the Schur complement and the mass matrix on $V^\delta_{\kappa}$. In our case it turns out that $\Gamma_{S_\kappa}\eqsim 1$ and $\Gamma_{V_\kappa}=1$ and $\gamma_{S_\kappa}\eqsim (\gamma_\kappa^\delta)^2$. There will be no theoretical results on the lower bound $\gamma_{V_\kappa}$; we provide only some numerical insights in Section \ref{chap4:sec:numerics}.

\subsection{Schur complement}

Finding a good preconditioner for the Schur complement is rather straightforward under the assumption that the inf-sup constant $\gamma^\delta_\kappa$ is uniformly bounded from below. This assumption is already necessary to obtain pollution-free approximations, as discussed in Theorem \ref{chap4:theorem:quasi-optimality}. The next lemma establishes a connection between the Schur complement ${\bf S}_\kappa$ and the mass matrix ${\bf M}^U$, suggesting that the preconditioner ${\bf Q}_S$ should resemble ${\bf M}^U$.
\begin{lemma}\label{chap4:lemma:prec schur complement}
It holds that 
\begin{align}\label{chap4:eq:conditioning Schur}
(\gamma^\delta_\kappa)^2{\bf M}^U\leq {\bf S}_\kappa\leq {\bf M}^U.
\end{align}
Furthermore, the lower bound is sharp, i.e. there is a ${\bf z}\in \mathbb{C}^{|\Phi|}$ such that $(\gamma_\kappa^\delta)^2{\bf z}^H{\bf M}^U{\bf z} = {\bf z}^H {\bf S}_\kappa{\bf z}$.
\end{lemma}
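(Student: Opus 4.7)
The plan is to identify the discrete quadratic forms with continuous ones and then recognize the supremum defining ${\bf z}^H {\bf S}_\kappa {\bf z}$ as the square of the numerator in the definition of $\gamma^\delta_\kappa$. Concretely, for a coordinate vector ${\bf z}\in\mathbb{C}^{|\Phi|}$ corresponding to $\mathbbm{u}^\delta=\sum_j z_j \varphi_j \in U^\delta$ and ${\bf w}\in\mathbb{C}^{|\Psi|}$ corresponding to $\mathbbm{v}^\delta=\sum_i w_i\psi_i\in V_\kappa^\delta$, the definitions of the matrices give
\[
{\bf z}^H{\bf M}^U{\bf z}=\|\mathbbm{u}^\delta\|_U^2,\qquad {\bf w}^H{\bf M}^{V_\kappa}{\bf w}=\|B_\kappa'\mathbbm{v}^\delta\|_U^2=\|\mathbbm{v}^\delta\|_{V_\kappa}^2,
\]
and ${\bf w}^H{\bf B}_\kappa{\bf z}=\langle \mathbbm{u}^\delta,B_\kappa'\mathbbm{v}^\delta\rangle_U=(B_\kappa\mathbbm{u}^\delta)(\mathbbm{v}^\delta)$.

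Next I would invoke the standard identity for the Schur complement as a weighted Rayleigh quotient: since ${\bf M}^{V_\kappa}$ is Hermitian positive definite,
\[
{\bf z}^H{\bf S}_\kappa{\bf z}={\bf z}^H{\bf B}_\kappa^H({\bf M}^{V_\kappa})^{-1}{\bf B}_\kappa{\bf z}=\sup_{{\bf w}\neq 0}\frac{|{\bf w}^H{\bf B}_\kappa{\bf z}|^2}{{\bf w}^H{\bf M}^{V_\kappa}{\bf w}}=\sup_{0\neq\mathbbm{v}^\delta\in V_\kappa^\delta}\frac{|(B_\kappa\mathbbm{u}^\delta)(\mathbbm{v}^\delta)|^2}{\|\mathbbm{v}^\delta\|_{V_\kappa}^2},
\]
where the supremum is attained at ${\bf w}=({\bf M}^{V_\kappa})^{-1}{\bf B}_\kappa{\bf z}$. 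This is the key identification.

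The upper bound then follows from one application of Cauchy--Schwarz in $U$, namely $|\langle \mathbbm{u}^\delta,B_\kappa'\mathbbm{v}^\delta\rangle_U|\le \|\mathbbm{u}^\delta\|_U\,\|B_\kappa'\mathbbm{v}^\delta\|_U$, yielding ${\bf z}^H{\bf S}_\kappa{\bf z}\le\|\mathbbm{u}^\delta\|_U^2={\bf z}^H{\bf M}^U{\bf z}$ for every ${\bf z}$. For the lower bound, dividing the displayed Rayleigh-quotient identity by ${\bf z}^H{\bf M}^U{\bf z}=\|\mathbbm{u}^\delta\|_U^2$ and taking the infimum over ${\bf z}\neq 0$ gives exactly $(\gamma^\delta_\kappa)^2$, by the definition \eqref{chap4:inf-sup definition}. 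Sharpness of the lower bound is immediate because in finite dimensions the generalized eigenvalue problem $({\bf S}_\kappa-\lambda{\bf M}^U){\bf z}=0$ attains its smallest eigenvalue $(\gamma^\delta_\kappa)^2$ at an eigenvector ${\bf z}$; this ${\bf z}$ realizes equality.

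There is no real obstacle here: the only subtlety is bookkeeping with the anti-linear convention for duals on complex Hilbert spaces when matching $(B_\kappa\mathbbm{u}^\delta)(\mathbbm{v}^\delta)$ to the sesquilinear form ${\bf w}^H{\bf B}_\kappa{\bf z}$ (so that conjugates land on the correct side, matching the definition of ${\bf B}_\kappa$). Once that is set, the whole argument is the Schur-complement/Rayleigh-quotient dictionary applied to the inf-sup constant.
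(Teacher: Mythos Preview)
Your proof is correct and follows essentially the same route as the paper: both identify ${\bf z}^H{\bf S}_\kappa{\bf z}$ with the discrete dual norm $\|B_\kappa\mathbbm{u}^\delta\|_{{V_\kappa^\delta}'}^2$ via the Rayleigh-quotient representation of the Schur complement, then read off the two bounds and sharpness directly from the definition of $\gamma_\kappa^\delta$. Your explicit invocation of Cauchy--Schwarz for the upper bound and of the finite-dimensional generalized eigenvalue problem for sharpness are just slightly more detailed phrasings of what the paper leaves implicit.
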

\begin{proof}
We make use of arguments from \cite{book_Elman}. Pick ${\bf z}\in \mathbb{C}^{|\Phi|}$ and define $\mathbbm{z}^\delta = \sum \varphi_i {\bf z}_i$. The inequalities in \eqref{chap4:eq:conditioning Schur} follow from
\begin{align}\label{chap4:eq:lemma schur 1}
(\gamma^\delta_\kappa)^2\leq\frac{\|B_\kappa \mathbbm{z}^\delta \|^2_{{V_{\kappa}^\delta}'}}{\|\mathbbm{z}^\delta\|^2_U} \leq 1,
\end{align}
and
\begin{align*}
    {\bf z}^H{\bf S}_\kappa {\bf z} &= 
(({\bf M}^{V_\kappa})^{-\tfrac12} {\bf B}_\kappa{\bf z})^H(({\bf M}^{V_\kappa})^{-\tfrac12} {\bf B}_\kappa{\bf z}) \\
&=\sup_{{\bf w}\not = 0}\frac{({\bf w}^H({\bf M}^{V_\kappa})^{-\tfrac12} {\bf B}_\kappa{\bf z})^2}{{\bf w}^H{\bf w}} \\
&= \sup_{{\bf v}\not=0}\frac{({\bf v}^H {\bf B}_\kappa {\bf z})^2}{{\bf v}^H {\bf M}^{V_\kappa}{\bf v}} \\
&= \|B_\kappa \mathbbm{z}^\delta \|^2_{{V_\kappa^\delta}'},
\end{align*}
which implies that
\begin{align}\label{chap4:eq:lemma schur 2}
\frac{{\bf z}^H{\bf S}_\kappa {\bf z}}{{\bf z}^H{\bf M}^U{\bf z}} = \frac{\|B_\kappa \mathbbm{z}^\delta \|^2_{{V_\kappa^\delta}'}}{\|\mathbbm{z}^\delta\|^2_U}.
\end{align} 
The second statement follows from the definition of $\gamma_\kappa^\delta$ in \eqref{chap4:inf-sup definition}.
\end{proof}

For quasi-uniform meshes, it is known that after equipping $U^\delta$ with Lagrange bases, the condition number of ${\bf M}^U$ is uniformly bounded (not in $p$ however). In this case, a preconditioner equal to a suitable scalar times the identity would ensure $\gamma_{S_\kappa} \eqsim (\gamma^\delta_\kappa)^2$ and $\Gamma_{S_\kappa}\eqsim 1$, where the hidden constants depend on the shape regularity parameters of the mesh. In this work, however, we aim for the spectrum of ${\bf M}^U$ to be clustered around $1$ for \emph{any} conforming mesh, so that $\gamma_{S_\kappa} \eqsim (\gamma^\delta_\kappa)^2$ and $\Gamma_{S_\kappa}\eqsim 1$ hold when ${\bf Q}_S:=\identity$.

To achieve this, we will rescale the Lagrange basis functions according to the local mesh-size. For each element $K$, we define $h_K^d:=\vol(K)/\vol(\hat{K})$, where $\hat K$ is some reference element. Let $\tilde\Phi:=\{\tilde \varphi_i\colon i\in I\}$ be the set of Lagrange basis functions.
Then, for each $i\in I$, let $h_i:=(\sum_{K\subset \supp \phi_i} h_K^d)^{1/d}$ and set $\varphi_i = \frac{1}{h_i^d}\tilde \varphi_i$. We define our rescaled basis $\Phi:=\{\varphi_i\colon i\in I\}$. We obtain the following lemma.
\begin{lemma}
We have ${\bf u}^H{\bf M}^U {\bf u}\eqsim {\bf u}^H{\bf u}$, where the hidden constants depend solely on the conditioning of the finite element basis on the reference element $\hat{K}$.
\end{lemma}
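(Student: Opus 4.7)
The plan is a standard element-wise scaling argument that reduces everything to the reference element $\hat K$. The starting point is the identity
${\bf u}^H {\bf M}^U {\bf u} = \|w\|_U^2 = \sum_K \|w\|_{L_2(K)^{d+1}}^2$,
where $w := \sum_i {\bf u}_i \varphi_i \in U^\delta$. This uses nothing more than the definition of the mass matrix together with the fact that $\|\cdot\|_U$ is the $L_2$-norm on a Cartesian product and is additive over elements. Because the statement is purely local in character, this decomposition is the right one: each term $\|w\|_{L_2(K)^{d+1}}^2$ involves only those coefficients ${\bf u}_i$ for which $K\subset\supp\varphi_i$, a set whose cardinality is bounded by a constant depending only on $p$ and $d$.

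On a fixed element $K$, I would pull back via the affine reference map $F_K\colon \hat K \to K$. The change of variables produces the Jacobian factor $\vol(K)/\vol(\hat K)=h_K^d$, the Lagrange functions $\tilde\varphi_i$ pull back to the (finite, fixed) family of reference Lagrange basis functions $\{\hat\varphi_j\}$, and the rescaling $\varphi_i = h_i^{-d}\tilde\varphi_i$ survives the pullback as a multiplicative weight on the coefficients. Equivalence of norms on the finite-dimensional reference polynomial space then yields that the $L_2(\hat K)$-norm of the pulled-back function is comparable to the $\ell_2$-norm of its coefficient vector in the reference Lagrange basis, with equivalence constants that are precisely those of the Gram matrix of that reference basis on $\hat K$. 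Collecting the resulting powers of $h_K$ and $h_i$, the element contribution $\|w\|_{L_2(K)^{d+1}}^2$ becomes comparable to a weighted sum of $|{\bf u}_i|^2$ over those indices $i$ with $K\subset \supp\varphi_i$, the weights being an explicit function of $h_K$ and $h_i$.

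The final step is to sum over $K$ and interchange the order of summation, so that the inner sums run over elements $K\subset\supp\varphi_i$ for each fixed $i$. The averaging formula $h_i^d=\sum_{K\subset\supp\varphi_i} h_K^d$ is exactly tuned so that this inner sum telescopes to a dimensionless constant, leaving $\sum_i|{\bf u}_i|^2 = {\bf u}^H {\bf u}$ up to the reference-element equivalence constants.

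The main obstacle is really just the bookkeeping of the powers of $h_K$ versus $h_i$: one must verify that the Jacobian factor from the affine pullback, combined with the weight introduced by the rescaling of $\varphi_i$, reduces on each element to a quantity that, after summation over $K$ and application of the defining identity for $h_i$, is mesh-independent. Crucially, because the averaging weight is defined as the sum of the element volumes over $\supp\varphi_i$ rather than as some pointwise mesh-width, no shape-regularity assumption enters; the cancellation is built into the definition of $h_i$. All constants appearing in the final equivalence are therefore those of the Gram matrix of the reference Lagrange basis on $\hat K$, as claimed.
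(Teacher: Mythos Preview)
Your proposal is correct and follows essentially the same approach as the paper's proof: decompose the $U$-norm element-wise, pull back to the reference element via the affine map (picking up the Jacobian factor $h_K^d$), apply norm equivalence on $P_p(\hat K)$ to pass to the coefficient vector, interchange the order of summation, and use the defining identity $h_i^d=\sum_{K\subset\supp\varphi_i} h_K^d$ to collapse the weights to a mesh-independent constant. The paper phrases the norm-equivalence step via the dual basis $(N_i)$ of the Lagrange functions (so that $N_i(\mathbbm{z}^\delta)$ recovers the $i$-th coefficient up to the scaling factor), but this is exactly your ``$\ell_2$-norm of the coefficient vector in the reference basis'' step.
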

\begin{proof}
Let $(N_i)_{i\in I}$ be the set of dual basis functions of $\tilde \Phi$, i.e. $N_i(\tilde \varphi_j) = \delta_{i,j}$, where $\delta_{i,j}$ is the Kronecker delta function. Furthermore, let $(\hat\varphi_j)_{j\in J_{\hat K}}$ and $(\hat N_j)_{j\in J_{\hat K}}$ be the reference basis on $\hat K$ and its dual basis respectively. Lastly, for $K\in \tria^\delta$, we define the set $I_K:=\{i\in I\colon K\subset \supp\varphi_i\}$.  

Let $F_K\colon \hat K\to K$ be the \new{affine} mapping which maps $\hat K$ onto $K$. For any $\mathbbm{z}^\delta\in P_p(K)$ let $\widehat{\mathbbm{z}^\delta}(\cdot):=\mathbbm{z}^\delta(F_K(\cdot))$. Thanks to the Lagrange basis being \emph{affine equivalent}, for any $\mathbbm{z}^\delta\in P_p(K)$ it holds that 
\begin{align}\label{chap4:eq:affine equivalence}
    \sum_{j\in J_{\hat K}}|\hat N_j(\widehat{\mathbbm{z}^\delta})|^2 = \sum_{i\in I_K} |N_i(\mathbbm{z}^\delta)|^2.
\end{align}
Write $\mathbbm{z}^\delta = \sum_j {\bf z}_j\phi_j$, then
    \begin{align*}
        &{\bf z}^H{\bf M}^U {\bf z} = 
        \|\mathbbm{z}^\delta\|^2_{(L_2(\Omega))^{d+1}}= 
        \sum_K\|\mathbbm{z}^\delta|_K\|_{(L_2(K))^{d+1}}^2 = 
        \sum_K h_K^d \|\widehat{\mathbbm{z}^\delta|_K}\|^2_{(L_2(\hat K))^{d+1}} \eqsim \\
        &\sum_K h_K^d \sum_{j \in J_{\hat K}} |\hat N_j(\widehat{\mathbbm{z}^\delta|_K})|^2=
        \sum_K h_K^d \sum_{i \in I_K} |N_i(\mathbbm{z}^\delta|_K)|^2 =
        \sum_{i\in I} \sum_{K\subset\supp \phi_i} h_K^d |N_i(\mathbbm{z}^\delta|_K)|^2=\\
        &\sum_{i\in I} \sum_{K\subset\supp \phi_i} \frac{h_K^d}{h_i^d} |{\bf z}_i|^2=
        {\bf z}^H{\bf z}.
    \end{align*}
    Here we used norm-equivalence on the space $P_p(\hat K)$ and \eqref{chap4:eq:affine equivalence}.
\end{proof}
\begin{remark}\label{chap4:remark:Chebyshev}
    Instead of choosing ${\bf Q}_S = \identity$, one could let $({\bf Q}_S)^{-1}$ be the result of a few Chebyshev or Richardson iterations using the matrix ${\bf M}^U$. In this way, we can get ${\bf Q}_S$ to be as close to ${\bf M}^U$ as we want. Although the application of $({\bf Q}_S)^{-1}$ then becomes more expensive, this approach reduces the number of iterations needed for the iterative solver of \eqref{chap4:discrete}, which can reduce the overall cost. 
\end{remark}

\begin{remark}
When $U^\delta = \cS_p^{-1}(\tria^\delta)^{d+1}$, then $({\bf M}^U)^{-1}$ can be applied in $\mathcal{O}(n)$ operations and we may choose ${\bf Q}_S={\bf M}^U$.
\end{remark}

\subsection{Preconditioner for ${\bf M}^{V_\kappa}$}\label{chap4:sec:preconditionerV}
For preconditioning the matrix ${\bf M}^{V_\kappa}$ we will make use of Hermitian successive subspace corrections (HSSC) in the space $V_\kappa^\delta$. The theory of successive subspace corrections is well-established (see, for example, \cite{MR1193013}), but in order to keep the discussion self-contained, we will briefly summarize some key concepts here.
Let $X$ be some Hilbert space. For successive subspace corrections, we need a sequence of subspaces $(X_i)_{i=0}^N$, where $N$ is some integer. The corresponding \emph{successive subspace correction operator} $Q^{-1}\colon X'\to X$ is defined as follows: for $f\in X'$ we define $x = Q^{-1}{f}$, where $x$ is computed using the algorithm below.
\begin{itemize}\label{chap4:correction-operator}
    \item Set $x = 0$.
    \item For $i = 0,1,\ldots, N-1, N$ let $w$ solve
    $$
    \langle w, \undertilde{w}\rangle_{X} = f(\undertilde{w}) - \langle v, \undertilde{w}\rangle_{X}\mbox{ for all } \undertilde{w}\in X_i,
    $$
    and set $v\leftarrow v + w$. 
\end{itemize}

In our case, we want the preconditioner $Q^{-1}_{V_\kappa}\colon {V_\kappa^\delta}' \to V_\kappa^\delta$ to be Hermitian positive definite. To achieve this, we choose a sequence of subspaces 
\begin{align}\label{chap4:eq:subspaces}
    V_0^\delta, V_1^\delta,\ldots, V_{N-1}^\delta, V_N^\delta, V_{N-1}^\delta\ldots, V_1^\delta, V_0^\delta\subset V_\kappa^\delta,
\end{align}
satisfying $\sum_{i =0}^N V_i^\delta = V_\kappa^\delta$. By visiting each subspace $V_i^\delta$ twice—once in forward order, once in reverse—except for $V_N^\delta$, we ensure that $Q_{V_\kappa}$ is a Hermitian operator (the second pass through $V_N^\delta$ would be redundant and is thus omitted).
The Hermitian matrix ${\bf Q}_{V_\kappa}$ is defined as the matrix representation of the mapping $Q_{V_\kappa}$, i.e. we have $({\bf Q}_{V_\kappa})_{i,j} = (Q_{V_\kappa}\psi_j)(\psi_i)$. 

To show that the preconditioner is positive definite we now show some simple facts about the spectrum of ${\bf Q}_{V_\kappa}^{-1}{\bf M}^{V_\kappa}$.
Denote by $P_i\colon V\to V^\delta_i$ the orthogonal projection operator onto $V^\delta_i$ w.r.t $\langle \cdot, \cdot\rangle_{V_\kappa}$, and define $M^{V_\kappa^\delta}\colon V_\kappa^\delta\to {V_\kappa^\delta}'$ by $(M^{V_\kappa^\delta}\mathbbm{w}^\delta)(\mathbbm{v}^\delta) = \langle \mathbbm{w}^\delta, \mathbbm{v}^\delta\rangle_{V_\kappa}$, i.e. ${\bf M}^{V_\kappa}$ is the matrix representation of $M^{V_\kappa^\delta}$.
Using induction with respect to the number of subspaces $N$, and using $(I-P_N)(I-P_N)=(I-P_N)$ one can deduce that
$$
\mathbbm{z}^\delta - Q^{-1}_{V_\kappa}M^{V_\kappa^\delta} \mathbbm{z}^\delta = \prod_{i=0}^N(I-P_i)\prod_{i=N}^0 (I-P_i)\mathbbm{z}^\delta.
$$
This implies that 
\begin{align}\label{chap4:error_ssc}
    \langle Q^{-1}_{V_\kappa}M^{V_\kappa^\delta} \mathbbm{z}^\delta, \mathbbm{z}^\delta\rangle_{V_\kappa} = \langle \mathbbm{z}^\delta, \mathbbm{z}^\delta\rangle_{V_\kappa} - \langle \prod_{i=N}^0 (I-P_i)\mathbbm{z}^\delta, \prod_{i=N}^0 (I-P_i)\mathbbm{z}^\delta\rangle_{V_\kappa} \leq  \langle \mathbbm{z}^\delta, \mathbbm{z}^\delta\rangle_{V_\kappa}.
\end{align}
Hence, $\Gamma_{V_\kappa}\leq 1$. Since, for $\mathbbm{z}^\delta\in V_0^\delta$, it holds that $Q^{-1}_{V_\kappa}M^{V_\kappa^\delta} \mathbbm{z}^\delta = \mathbbm{z}^\delta$, we conclude that $\Gamma_{V_\kappa} = 1$. 

Furthermore, $\sum_{i =0}^N V^\delta_i = V_\kappa^\delta$ implies that $\|\prod_{i=N}^0 (I-P_i)\mathbbm{z}^\delta\|_{V_\kappa} < \| \mathbbm{z}^\delta\|_{V_\kappa}$ for any $\mathbbm{z}^\delta\in V_\kappa^\delta$. Together with \eqref{chap4:error_ssc}, we can conclude that the lower bound $\gamma_{V_\kappa}>0$ holds, i.e. the preconditioner is a positive definite operator.

By the arguments above, any HSSC operator in $V_\kappa^\delta$ can be used as a preconditioner for ${\bf M}^{V_\kappa}$. To make the iterative solver more efficient, one has to choose the subspaces $(V^\delta_i)_{i=0}^N$ appropriately. This is a difficult task. On the one hand, the subspaces need to be rich enough for the spectrum of $Q^{-1}_{V_\kappa}M^{V_\kappa^\delta}$ to be small enough, but on the other hand, the subspaces need to be small for an efficient application of the preconditioner. In the next two sections, we introduce our chosen approach, which is guided by well-established principles.

\subsubsection{Multigrid}\label{chap4:multigrid}
Usually, it is beneficial to include subspaces that can be represented on coarse meshes. By including subspaces represented on multiple meshes, we are able to effectively smoothen high-frequency components on fine meshes and low-frequency components on coarse meshes. 

To efficiently perform corrections on subspaces represented on coarse meshes we employ multigrid operators.

Let $\tria^\delta_0 \prec\tria^\delta_1\prec\ldots\prec\tria^\delta_L=\tria^\delta$ be some nested sequence of triangulations. For each triangulation $\tria^\delta_\ell$, let $V_\ell =(\cS_{\tilde{p}}^0(\tria^\delta_\ell) \times \RT_{\tilde{p}}(\tria^\delta_\ell)) \cap V_\kappa^\delta$ be the corresponding finite element subspace of $V_\kappa^\delta$. 

For $\ell=1,\ldots, L$, let $I_\ell\colon V_{\ell-1}\to V_\ell$ be the inclusion operators and let the dual mapping ${I'_\ell}\colon V'_\ell\to V'_{\ell-1}$ be defined by $({I'_\ell}\mathbbm{f})(\mathbbm{v}) = \mathbbm{f}(I_\ell \mathbbm{v})$. Furthermore, on each level, we define $M_\ell\colon V_\ell\to V_\ell'$ as $(M_\ell \mathbbm{v})(\mathbbm{w}) = \langle \mathbbm{v}, \mathbbm{w} \rangle_{V_\kappa}$.
Finally, on each level we use successive subspace correction operators $S^{-1}_\ell \colon V_\ell'\to V_\ell$ and $(S^*_\ell)^{-1} \colon V_\ell'\to V_\ell$, which are called \emph{smoothers}. For $(V_{\ell, i})_{i=0}^{N_\ell}\subset V_\ell$ being the sequence of subspaces that define $S^{-1}_\ell$, the operator $(S^*_\ell)^{-1}$ is defined using the same sequence of subspaces, but in reversed order.

The variable V-cycle operator $Q^{-1}_\ell\colon V_\ell'\to V_\ell$ is then defined by induction as follows. Setting $Q^{-1}_{-1} = 0$, assuming that $Q^{-1}_{\ell-1}$ has been defined, for $\mathbbm{f}\in V_\ell'$ we define $\mathbbm{v}^\delta = Q^{-1}_\ell \mathbbm{f}$ by the following:
\begin{itemize}
    \item Set $\mathbbm{v}^\delta = 0$.
    \item For $k = 1,\ldots, m_\ell$, set
    $$
        \mathbbm{v}^\delta \leftarrow \mathbbm{v}^\delta + S^{-1}_\ell(\mathbbm{f}-M_\ell \mathbbm{v}^\delta).
    $$
    \item Set $\mathbbm{v}^\delta \rightarrow \mathbbm{v}^\delta + I_\ell Q^{-1}_{\ell-1}I_\ell'(\mathbbm{f}-M_\ell \mathbbm{v}^\delta).$
    \item For $k = 1,\ldots, m_\ell$, set
    $$
    \mathbbm{v}^\delta \leftarrow \mathbbm{v}^\delta + (S^*_\ell)^{-1}(\mathbbm{f}-M_\ell \mathbbm{v}^\delta).
    $$
\end{itemize}
Note that $S_\ell$ does not need to be Hermitian since we visit the subspaces that define $S^{-1}_\ell$ in reversed order when applying $(S^*_\ell)^{-1}$.

In the numerical experiments, we will choose $m_\ell = 1$ for all levels $\ell$, unless stated otherwise.

To see that the multigrid operator defined above is in fact an HSSC operator, we can use an argument by induction. If we assume that $Q_{\ell-1}$ is an HSSC operator in the space $V_{\ell-1}$ with the sequence of subspaces $(W_i^\delta)_{i=0}^N$, it follows that $I_\ell Q^{-1}_{\ell-1}I_\ell'$ is also an HSSC operator in the space $V_\ell$, but with subspaces $(I_\ell W_i^\delta)_{i=0}^N$. Then it is easy to conclude that $Q_\ell$ is an HSSC operator in the space $V_\ell$. Of course, to ensure $Q_\ell$ being positive definite, we need the sum of all the subspaces we encounter to be equal to $V_\ell$.

\subsubsection{Choice of smoother}
The choice of our smoother $S_\ell$ is based on results on multigrid operators for $H(\divv;\Omega)$ and $H^1(\Omega)$. For $H(\divv;\Omega)$, it is known that using a successive subspace correction operator with subspaces defined on vertex patches as a smoother leads to an efficient multigrid V-cycle preconditioner \cite{14.24, 14.26}. In contrast, for multigrid methods for $H^1(\Omega)$, a simple Gauss-Seidel smoother, where each subspace consists of the span of only a single function, is already sufficient. Hence, a smoother based on vertex patches would already give rise to a uniform preconditioner for bounded values of $\kappa$, since then it holds that ${\|\cdot\|_{V_{\kappa}}}\eqsim {\|\cdot\|_{H^1(\Omega)\times H(\divv;\Omega)}}$. Motivated by the above we will define our smoothers using function spaces on vertex patches.

For each vertex $\nu\in \cN_\ell$, let $V^\nu_\ell = \{ \mathbbm{v}^\delta\in V_\ell \colon \supp \mathbbm{v}^\delta \subseteq \omega^\nu_\ell \}$, be the subspace of finite element functions supported on the vertex patch $\omega^\nu_\ell:=\{K\in \mathcal{T}_\ell\colon \nu\in \overline{K}\}$. Recalling that we generally allow locally refined meshes, let $\nu_0,\ldots, \nu_{N_\ell}$ be a numbering of the vertices in $\tria^\delta_\ell$ for which the function space $V^{\nu_i}_\ell$ is not included in $V_{\ell-1}$.
By excluding vertex patches that have not been refined relative to the previous mesh, we ensure that the preconditioner can be implemented with a computational complexity of $\mathcal{O}(n)$, where $n$ denotes the number of degrees of freedom.

The smoother $S_\ell$ is defined as follows: for $\mathbbm{f} \in V_\ell'$ we define $\mathbbm{v}^\delta = S_\ell^{-1}\mathbbm{f}$ by the following
\begin{itemize}
    \item Set $\mathbbm{v}^\delta = 0$.
    \item For $\nu_j\in \cN_\ell$, $j = 1,2,\ldots,  N_\ell$ let
    $$
    \mathbbm{v}^\delta\leftarrow \mathbbm{v} + \mathbbm{w}^\delta,
    $$
    where $\mathbbm{w}^\delta\in V^{\nu_j}_\ell$ solves
    $$
    \langle \mathbbm{w}^\delta, \tilde{\mathbbm{w}}^\delta\rangle_{V_\kappa} = \mathbbm{f}(\tilde{\mathbbm{w}}^\delta) - \langle \mathbbm{v}^\delta, \tilde{\mathbbm{w}}^\delta\rangle_{V_\kappa}, \mbox{ for all } \tilde{\mathbbm{w}}^\delta\in V_\ell^{\nu_j}.
    $$    
\end{itemize}

\begin{remark}[Static condensation]
If $\tilde p$ is relatively large, one may employ static condensation at each level to enable a more efficient application of the above smoother $S_\ell$. Given the basis $\Psi$ of $V_\ell$ as described in Remark~\ref{chap4:remark:robin-discrete}, we can partition it into two disjoint sets of basis functions, $\Psi^\mathcal{K}$ and $\Psi^\mathcal{S}$, with $\Psi^\mathcal{K} \dot\cup \Psi^\mathcal{S} = \Psi$, such that each function in $\Psi^\mathcal{K}$ is supported on a single element $K \in \tria_\ell$.

We then define the subspaces $V_\ell^\mathcal{K} := \operatorname{span} \Psi^\mathcal{K}$ and $V_\ell^\mathcal{S} := \operatorname{span} \Psi^\mathcal{S}$. Next, consider the operator  
$$
M_\ell\colon V_\ell^\mathcal{K} \times V_\ell^\mathcal{S} \to (V_\ell^\mathcal{K})' \times (V_\ell^\mathcal{S})',
$$ 
defined by  
$$
M_\ell(\phi_\mathcal{K}, \phi_\mathcal{S})(\undertilde{\phi_\mathcal{K}}, \undertilde{\phi_\mathcal{S}}) = \langle \phi_\mathcal{K} + \phi_\mathcal{S}, \undertilde{\phi_\mathcal{K}} + \undertilde{\phi_\mathcal{S}} \rangle_{V_\kappa}.
$$ 
In block-form, this can be written as  
$$
M_\ell = \begin{bmatrix}
    M_{\mathcal{K}, \mathcal{K}} & M_{\mathcal{K}, \mathcal{S}} \\
    M_{\mathcal{S}, \mathcal{K}} & M_{\mathcal{S}, \mathcal{S}}
\end{bmatrix},
$$ 
where $M_{\mathcal{A},\mathcal{B}} \colon V_\ell^\mathcal{B} \to (V_\ell^\mathcal{A})'$ is defined by  
$$
(M_{\mathcal{A},\mathcal{B}} \psi_\mathcal{B})(\psi_\mathcal{A}) = \langle \psi_\mathcal{A}, \psi_\mathcal{B} \rangle_{V_\kappa},\quad \psi_\mathcal{A}\in V_\ell^\mathcal{A}, \psi_\mathcal{B}\in V_\ell^\mathcal{B},
$$ 
for $\mathcal{A}, \mathcal{B} \in \{\mathcal{K}, \mathcal{S}\}$.

Furthermore, define the transformation  
$$
\Xi = \begin{bmatrix}
    I & -M_{\mathcal{K},\mathcal{K}}^{-1} M_{\mathcal{K},\mathcal{S}} \\
    0 & I
\end{bmatrix}.
$$ 
It follows that for any $\psi_\mathcal{S} \in V_\ell^\mathcal{S}$ and $\psi_\mathcal{K} \in V_\ell^\mathcal{K}$, we have  
$$
M_\ell\left( \Xi \begin{Bmatrix} 0 \\ \psi_\mathcal{S} \end{Bmatrix} \right)\left( \begin{Bmatrix} \psi_\mathcal{K} \\ 0 \end{Bmatrix} \right) = 0.
$$ 
In other words, the space $V_\ell$ decomposes orthogonally with respect to the $V_\kappa$-inner product as  
\begin{align}\label{chap4:eq:orthogonal subspaces}
    V_\ell = V_\ell^\mathcal{K} \oplus^{\perp_{V_\kappa}} \Xi (V_\ell^\mathcal{S}).
\end{align}
For each vertex $\nu \in \mathcal{N}_\ell$, define the local subspaces  
\begin{align}\label{chap4:eq:vertexpatch_edge}
    \tilde{V}_\ell^\nu := \{ \mathbbm{v}^\delta \in \Xi(V_\ell^\mathcal{S}) \colon \operatorname{supp}(\mathbbm{v}^\delta) \subseteq \omega_\ell^\nu \},
\end{align}
and 
\begin{align}\label{chap4:eq:vertexpatch_el}
V_\ell^{\nu,\mathcal{K}}:=\{\mathbbm{v}^\delta\in V_\ell^\mathcal{K}\colon \supp(\mathbbm{v}^\delta)\subseteq \omega_\ell^\nu\}.
\end{align}
It follows from~\eqref{chap4:eq:orthogonal subspaces} and $V_\ell^\nu = \Span \tilde{V}_\ell^\nu\dot\cup V_\ell^{\nu,\mathcal{K}}$, that for every vertex $\nu$, subspace corrections in $V_\ell^\nu$ can equivalently be performed by first applying a subspace correction in $V_\ell^{\nu,\mathcal{K}}$, followed by a correction in $\tilde{V}_\ell^\nu$. 

Furthermore, since $V_\ell^{\nu,\mathcal{K}}$ is orthogonal to all other subspaces of the form \eqref{chap4:eq:vertexpatch_edge} or \eqref{chap4:eq:vertexpatch_el},
it follows that the subspace correction in $V_\ell^{\nu,\mathcal{K}}$ can be postponed until all the remaining subspace corrections have been performed.
This reasoning can be applied to all vertex patches, to conclude that the smoother $S_\ell$ can equivalently be defined by $\mathbbm{v}^\delta = S_\ell^{-1} \mathbbm{f}$, where $\mathbbm{v}^\delta$ is computed as follows:

\begin{itemize}
    \item Initialize $\mathbbm{v}^\delta = 0$.
    \item For each vertex $\nu_j \in \mathcal{N}_\ell$, $j = 1, \dots, \#\mathcal{N}_\ell$, update
    $$
    \mathbbm{v}^\delta \leftarrow \mathbbm{v}^\delta + \mathbbm{w}^\delta,
    $$ 
    where $\mathbbm{w}^\delta \in \tilde{V}_\ell^{\nu_j}$ solves
    $$
    \langle \mathbbm{w}^\delta, \tilde{\mathbbm{w}}^\delta \rangle_{V_\kappa} = \mathbbm{f}(\tilde{\mathbbm{w}}^\delta) - \langle \mathbbm{v}^\delta, \tilde{\mathbbm{w}}^\delta \rangle_{V_\kappa}, \quad \forall \tilde{\mathbbm{w}}^\delta \in \tilde{V}_\ell^{\nu_j}.
    $$
    \item Finally, update
    $$
    \mathbbm{v}^\delta \leftarrow \mathbbm{v}^\delta + \mathbbm{w}^\delta,
    $$ 
    where $\mathbbm{w}^\delta \in V_\ell^\mathcal{K}$ solves
    $$
    \langle \mathbbm{w}^\delta, \tilde{\mathbbm{w}}^\delta \rangle_{V_\kappa} = \mathbbm{f}(\tilde{\mathbbm{w}}^\delta) - \langle \mathbbm{v}^\delta, \tilde{\mathbbm{w}}^\delta \rangle_{V_\kappa}, \quad \forall \tilde{\mathbbm{w}}^\delta \in V_\ell^\mathcal{K}.
    $$
\end{itemize}
Given two basis functions $\psi^\mathcal{S}, \undertilde{\psi}^\mathcal{S} \in V_\ell^\mathcal{S}$, their inner product under the transformation $\Xi$ reads  
$$
\langle \Xi \psi^\mathcal{S}, \Xi \undertilde{\psi}^\mathcal{S} \rangle_{V_\kappa} = (\mathcal{S} \psi^\mathcal{S})(\undertilde{\psi}^\mathcal{S}),
$$ 
where  
$$
\mathcal{S} := M_{\mathcal{S}, \mathcal{S}} - M_{\mathcal{S}, \mathcal{K}} M_{\mathcal{K}, \mathcal{K}}^{-1} M_{\mathcal{K}, \mathcal{S}}.
$$ 
Computing the matrix representation of $\mathcal{S}$ is not computationally expensive since $M_{\mathcal{K}, \mathcal{K}}$ is block-diagonal and can be inverted efficiently.
For the same reason, the subspace correction in $V_\ell^\mathcal{K}$ is computationally efficient.

Since each local subspace $\tilde{V}_\ell^{\nu_j}$ is smaller than the corresponding $V_\ell^{\nu_j}$, the associated corrections are cheaper to compute, making this implementation of the smoother $S_\ell$ more efficient when $\tilde{p}$ is sufficiently large.
\end{remark}

\begin{remark}\label{chap4:remark:Helmholtz-kernel}
The intuition behind different aspects of our preconditioner can be understood through the interaction of different error components with the $V_\kappa$-inner product. Recall that the inner product on $V_\kappa$ is given by 
$$
\langle (\eta, \vec v),(\undertilde{\eta}, \undertilde{\vec v})\rangle_{V_\kappa} = \langle -\eta -\frac{1}{\kappa}\divv \vec v, -\undertilde{\eta} -\frac{1}{\kappa}\divv \undertilde{\vec v}\rangle_{L_2(\Omega)}
+ \langle \frac{1}{\kappa}\nabla \eta - \vec v, \frac{1}{\kappa}\nabla \undertilde{\eta} - \undertilde{\vec v}\rangle_{L_2(\Omega)^d}.
$$

First, we can argue that coarse subspaces are essential for resolving smooth error components. Assume that the mesh-size satisfies $h\ll\frac{1}{\kappa}$. Let $(\varphi, \psi)\in V_\kappa^\delta$ where $\varphi$ is a Lagrange basis function and $\psi$ is a Raviart-Thomas basis function that is not divergence-free, and let $(\eta, \vec{v})\in V^\delta_\kappa$ be a smooth, non-oscillatory function. For simplicity we assume that $(\eta, \vec v)=(1,\vec{1})$.

Standard scaling arguments show that 
$\|\psi\|_{L_2(\Omega)^d} \eqsim\|\varphi\|_{L^2(\Omega)} \eqsim h^{d/2}$, while \( \|\frac{1}{\kappa}\divv\psi\|_{L_2(\Omega)}\eqsim \|\frac1\kappa\nabla\varphi\|_{L_2(\Omega)} \eqsim h^{\frac{d-2}{2}}/\kappa \). Hence, on fine meshes, it holds that 
$\|(\varphi,\psi)\|_{V_\kappa}\approx h^{\frac{d-2}{2}}/\kappa $ and $|\langle (\eta, \vec v), (\varphi, \psi)\rangle_{V_\kappa}|\approx h^d.$
Performing a correction on a subspace spanned by \( (\varphi,\psi) \), by solving \( \langle c( \varphi, \psi), (\varphi,\psi) \rangle_{V_\kappa} = \langle (\eta, \vec v), (\varphi, \psi) \rangle_{V_\kappa} \), therefore yields \( |c| \approx \kappa^2 h^2 \). The resulting correction \( c \cdot(\varphi,\psi) \) is very small and does not locally resemble \( (\eta, \vec v) \) well. Furthermore, there are not enough divergence-free Raviart-Thomas basis functions to avoid this problem. Consequently, a smoother defined on a fine mesh has little impact on smooth components. On coarser meshes, however, smoothing does have an effect, which justifies the inclusion of coarse levels in the preconditioner.

The use of subspaces defined on vertex patches is motivated by the behavior of functions in \( H(\operatorname{div}; \Omega) \). 
Suppose \( (0, \vec{u}) \in V^\delta_\kappa\), where $\vec{u}$ is an oscillatory, divergence-free function.
Once again, through similar reasoning as above, performing corrections on subspaces spanned by one Raviart-Thomas basis function has very little effect on a fine mesh since there are not enough divergence-free Raviart-Thomas basis functions. Furthermore, since the function $\vec{u}$ is not represented on coarse meshes, corrections on coarse meshes will also not be effective.
However, on vertex patches one can construct additional discrete functions in $\RT_p(\tria^\delta)$ that are divergence-free, enabling more effective local corrections. This justifies the incorporation of vertex-patch subspaces.

Finally, in our context we also encounter \( (\eta, \vec{v}) \in V^\delta_\kappa\) satisfying
\[
( -\eta - \frac{1}{\kappa} \operatorname{div} \vec{v}, \, \frac{1}{\kappa} \nabla\eta - \vec{v} ) \approx (0, 0).
\]
Such functions approximately satisfy the equation \( \triangle \eta + \kappa^2 \eta = 0 \), and can be viewed as lying in the 'Helmholtz kernel'. Since these functions are typically highly oscillatory, coarse-grid corrections have limited effect on them. Moreover, vertex-patch subspaces do not contain functions of this type. As a result, these components are only weakly affected by the preconditioner. This explains why the preconditioner does not yield condition numbers that are uniformly bounded with respect to $\kappa$.
\end{remark}

\subsection{Stopping criterion}\label{chap4:section:stoppingcriteria}

    An advantage of iterative methods is the ability to stop the solution process as soon as the algebraic errors become insignificant compared to the total errors. Beyond this point, performing additional iterations of the matrix-vector solver does not increase the quality of the finite element approximation. 
    
    In this section, we develop heuristics to approximate the \emph{total error} and the \emph{algebraic error} when we use the MINRES algorithm as the iterative solver. Based on these approximations, we propose a stopping criterion: terminate the MINRES algorithm when the \emph{approximate algebraic error} is a fraction of the \emph{approximate total error}. In the numerical section below, this fraction is set to $\tfrac{1}{2}$.
    
    \subsubsection{Total error}
    Following Theorem \ref{chap4:thm:boosted} the total error $\|\mathbbm{u} - \mathbbm{u}^\delta\|_U$ is bounded from below by $\|B_\kappa '\mathbbm{v}^\delta\|_U$. In fact, numerical experiments suggest that this error estimator is close to being exact on meshes where the solution is being resolved. Therefore, for intermediate solutions $(\tilde{\mathbbm{v}}^\delta, \tilde{\mathbbm{u}}^\delta)$ arising from the iterative solver, we choose $\|B_\kappa '\tilde{\mathbbm{v}}^\delta\|_{U}$ as the approximate total error.
    
    While this approximation may not be accurate in the early iterations, it remains a safe choice. Our stopping criterion does not terminate the MINRES method prematurely, as we observed that $\|B_\kappa '\tilde{\mathbbm{v}}^\delta\|_{U}$ converges to $\|B_\kappa '\mathbbm{v}^\delta\|_{U}$ with a sufficient degree of accuracy, well before convergence of $\tilde{\mathbbm{u}}^\delta$ to $\mathbbm{u}^\delta$ (see Figure \ref{chap4:fig:error_against_iteration} in Section \ref{chap4:sec:numerics}).
    \subsubsection{Algebraic error}
    The algebraic error $\|\mathbbm{u}^\delta - \tilde{\mathbbm{u}}^\delta\|_U$ can be estimated by the residual norm of the matrix-vector equation divided by some constant $c$, which depends on the quantities in \eqref{chap4:spectral-equivalence} (see, for example, \cite[Theorem 4.10]{book_Elman}). 
    From the previous sections, we already know that $\Gamma_{S_\kappa}\eqsim 1$ and $\Gamma_{V_\kappa}=1$ and $\gamma_{S_\kappa}\eqsim (\gamma_\kappa^\delta)^2$ and it only remains to find an approximation of $\gamma_{V_\kappa}$ and $\gamma_\kappa^\delta$. To simplify the situation however, we assume\footnote[2]{This is not too far away from the truth because we want to ensure that $\gamma_\kappa^\delta$ is close to $1$ in order to avoid the pollution effect. This is in turn achieved by choosing $\tilde p$ big enough.} that $\gamma_{\kappa}^\delta=\Gamma_{S_\kappa}=1$, i.e. the preconditioner for the Schur complement satisfies ${\bf Q}_S = {\bf S}_\kappa$. \new{Under these assumptions, for all negative eigenvalues $\lambda<0$ of the preconditioned saddle point system
\begin{align}
\begin{pmatrix}
{\bf M}^{V_{\kappa}} & {\bf B}_\kappa \\
{\bf B}_\kappa^H & {\bf 0}
\end{pmatrix}
\begin{pmatrix} {\bf v} \\ {\bf u} \end{pmatrix}
= \lambda
\begin{pmatrix} {\bf Q}_{V_\kappa}{\bf v} \\ {\bf Q}_S {\bf u} \end{pmatrix},
\end{align}
it follows from \cite[Theorem 4.7]{book_Elman} that
    \begin{align}
        \gamma_{V_\kappa} \leq \frac{\lambda^2}{1+\lambda}.
    \end{align}}
We will assume that for \( \bar\lambda \) being the largest negative eigenvalue of the preconditioned saddle point system, the value \( \tfrac{ \bar\lambda^2 }{ 1 + \bar\lambda } \) provides a good approximation of \( \gamma_{V_\kappa} \).

To estimate the negative eigenvalue of the preconditioned saddle point system $\bar \lambda$, we use an algorithm developed in \cite{MR2774834}. There, a practical MINRES implementation can be found where the so-called \emph{harmonic Ritz values} of the preconditioned system are computed on the fly at very low cost. It is mentioned there that the largest negative harmonic Ritz value approximates the largest negative eigenvalue $\bar\lambda$ from below (and the smallest positive harmonic Ritz value approximates the smallest positive eigenvalue from above) and that the convergence of the MINRES method is numerically observed to be related to the quality of these approximations of the smallest positive and largest negative eigenvalues. 

Motivated by the above, in each MINRES iteration we approximate $\gamma_{V_\kappa}$ by $\bar\gamma_{V_\kappa}:=\tfrac{\tilde\lambda^2}{1+\tilde\lambda}$, where $\tilde\lambda$ is the largest negative harmonic Ritz value. The resulting approximation of the algebraic error is given by $1/c$ times the norm of the residual of the matrix-vector equation, where $c^2 = \bar\gamma_{V_\kappa}\left( 1+\tfrac{1}{2\bar\gamma_{V_\kappa}} - \sqrt{1+\tfrac{1}{4(\bar\gamma_{V_\kappa})^2} }\right)\approx \bar\gamma_{V_\kappa}$ is chosen in accordance with \cite[Theorem 4.10]{book_Elman}.

\section{Numerical experiments} \label{chap4:sec:numerics}

In this section we elaborate on the numerical experiments performed in \cite{204.18}. For the Helmholtz problems considered there we further investigate the dependence of $\gamma_\kappa^\delta$ on the wave number $\kappa$ and polynomial order $p$. We analyze the preconditioner on the space $V_\kappa$, by considering the two-grid method and the multigrid method. Finally, we solve the Helmholtz problem on these domains, also for higher wave numbers than considered in \cite{204.18}. 

\subsection{Three Helmholtz problems}

In our numerical experiments, we consider three examples where $d=2$. In the first example, we will prescribe the solution, whereas in the other two, the exact solution will be unknown. We will refer to these examples in the following sections. We use newest vertex bisection for mesh refinement \cite{MR2353951}.

The examples are described by specifying the domain $\Omega$ and its boundaries $\Gamma_D, \Gamma_N, \Gamma_R$ and the data $f, g_D, g_N$ and $g_R$. We will later refer to so-called \emph{plane-wave solutions} of the form $\phi_{\kappa \vec{r}}(\vec{x}):= e^{-i\kappa\vec{r}\cdot \vec{x}}$, where $|\vec{r}| = 1$. 

The second and third examples are so-called \emph{scattering problems}, which are challenging (see \cite{38.25}).

For each problem we start with an initial triangulation $\mathcal{T}_0$ of $\overline{\Omega}$ with an assignment of the newest vertices that satisfies the so-called \emph{matching condition}. Starting from this initial mesh we create a sequence of meshes $(\mathcal{T}_n)_{n\in\mathbbm{N}}$ of $\overline{\Omega}$, where each triangulation is created from its predecessor using newest vertex bisection. This sequence of triangulations is used to define the multigrid preconditioner from Section \ref{chap4:sec:preconditionerV}.

On each mesh $\mathcal{T}_k$, we consider the trial space $U^\delta=\cS_p^{0}(\tria_k) \times \cS_p^{0}(\tria_k)^{2}$, and test space
$V_\kappa^\delta=(\cS_{\tilde{p}}^{0}(\tria_k) \times \RT_{\tilde{p}}(\tria_k)) \cap V_\kappa$ for $p = 3$ and $\tilde{p}\geq p+2$.

\subsubsection{Plane-wave solution on unit square}\label{chap4:example1}
For the first example, we consider the unit square $\Omega = (0,1)^2$ with Robin boundary conditions. For $\vec{r} = (\cos(\pi/3), \sin(\pi/3))$ and different values for $\kappa>0$, we prescribe the plane-wave solution $\phi_{\kappa\vec{r}}$ and choose the data accordingly.

In this first example, we consider a uniform sequence of meshes $(\mathcal{T}_n)_{n\in\mathbbm{N}}$ of $\overline{\Omega}$, where $\tria_k$ is created by bisecting all triangles of $\tria_{k-1}$. This initial mesh in turn is created by cutting the domain along its diagonals and designating the interior vertex as the newest vertex in all triangles.

\subsubsection{Scattering on a non-trapping domain}\label{chap4:example2}
Secondly, we consider a so-called \emph{non-trapping scattering domain}. Namely, for 
$$
D:=\{\vec{x}\in (-1,1)^2 \colon 2|x_1|-\tfrac12<x_2<|x_1|\},
$$
let $\Omega:= (-1,1)^2\setminus \overline{D}$, $\Gamma_D:=\partial D$, and $\Gamma_R:=\partial (-1,1)^2$. We set $f=0=g_D$ and 
$g_R=\frac{1}{\kappa}(\frac{1}{\kappa} \nabla \phi_{\kappa \vec{r}}\cdot\vec{n}- i \phi_{\kappa \vec{r}})|_{\Gamma_R}=\frac{-i (\vec{r}\cdot\vec{n}+ 1)}{\kappa}\phi_{\kappa \vec{r}}|_{\Gamma_R}$, where $\vec{r}=(\cos(\pi/3),\sin(\pi/3))$.
This problem models the (soft) scattering of an incoming wave $\phi_{\kappa \vec{r}}$ by the obstacle $D$. 

The initial triangulation consist of 14 triangles where the newest vertices are chosen such that the mesh is matching. This triangulation is shown in the left picture in Figure \ref{chap4:fig:domain}.

For this example we will consider sequences of meshes generated by either uniform refinements or adaptive refinements. The adaptive refinement is driven by the a posteriori estimator presented in Section \ref{chap4:sec:errorestimation} using Dörfler marking with parameter $\theta = 0.6$.

\subsubsection{Scattering on an (elliptic) trapping domain}\label{chap4:example3}
Finally, we consider a so-called \emph{trapping domain}, where
$\Omega = (-1,1)^2\setminus \overline{D}$, $\Gamma_D=\partial D$, and $\Gamma_R=\partial (-1,1)^2$ and $D=D_1\cup D2$ where
$$
D_1=\{\vec{x}\in (-1,1)^2 \colon \tfrac{1}{4}\leq|y|\leq\tfrac{1}{4}-\tfrac{1}{2}x\}.
$$
$$
D_2=\{\vec{x}\in (-1,1)^2 \colon \tfrac{1}{2}+2x\leq |y|\leq \tfrac{1}{2}; x\geq -\tfrac{1}{2}\}.
$$
We set $f=0=g_D$ and 
$g_R=\frac{1}{\kappa}(\frac{1}{\kappa} \nabla \phi_{\kappa \vec{r}}\cdot\vec{n}- i \phi_{\kappa \vec{r}})|_{\Gamma_R}=\frac{-i (\vec{r}\cdot\vec{n}+ 1)}{\kappa}\phi_{\kappa \vec{r}}|_{\Gamma_R}$, where $\vec{r}=(\cos(9\pi/10),\sin(9\pi/10))$. 

The domain, and the initial triangulation are illustrated in Figure \ref{chap4:fig:domain}. Again, we consider both uniform and adaptive refinement strategies.

\begin{figure}[h!]
    \begin{subfigure}{.5\textwidth}
    \centering
    \includegraphics[width=\linewidth]{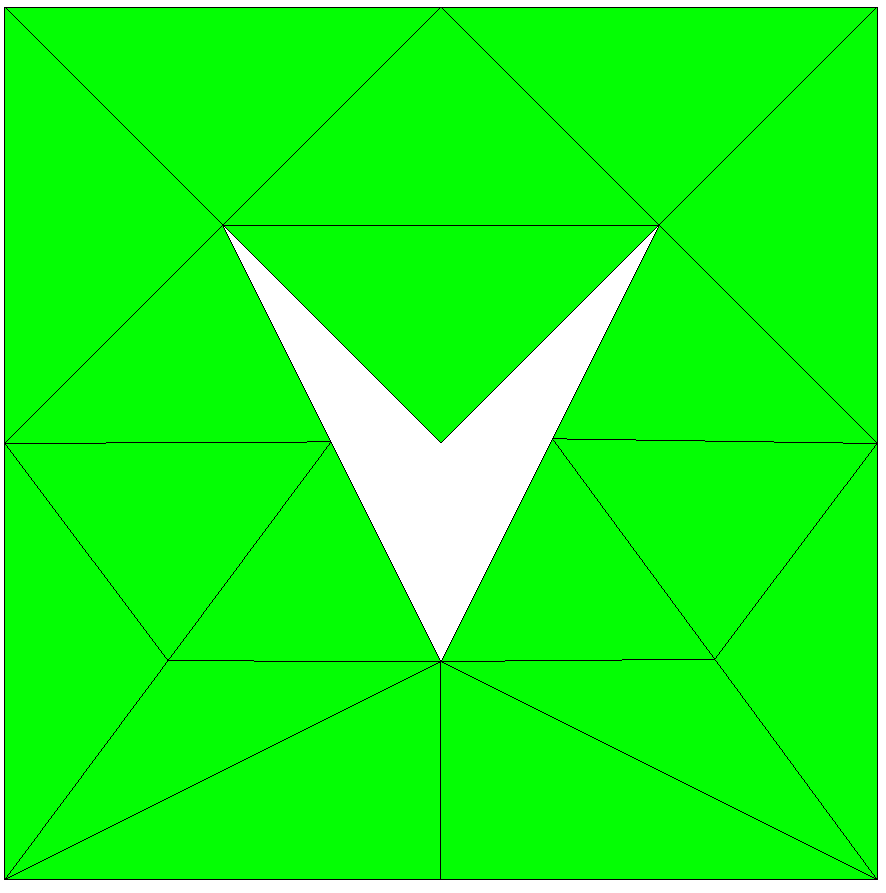}
    \end{subfigure}\hspace*{0cm}%
    \begin{subfigure}{.5\textwidth}
    \centering
    \includegraphics[width=\linewidth]{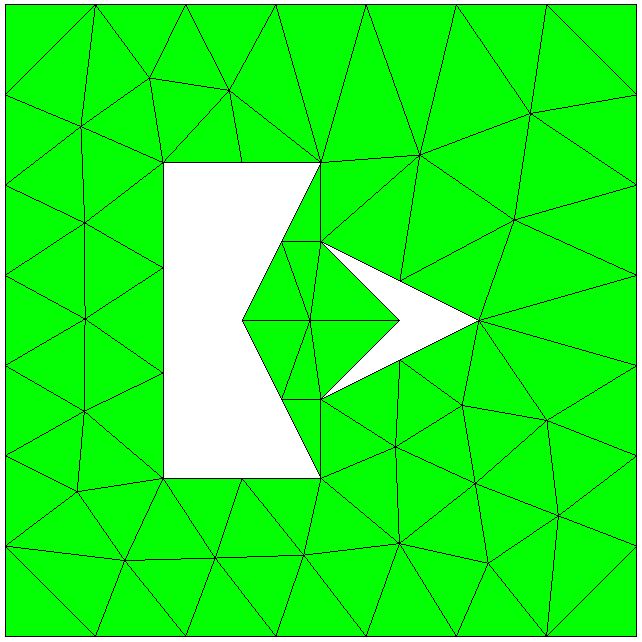}
    \end{subfigure}
\caption{Left: the non-trapping domain with its initial triangulation. Right: the trapping domain with its initial triangulation.}
\label{chap4:fig:domain}
\end{figure}

\subsection{Further investigation of pollution factors}\label{chap4:sec:numerics_pollution}

In this section we will numerically compute the inf-sup constants $\gamma_\kappa^\delta$. As mentioned before, for the first example, we know theoretically how to choose $\tilde{p}$ depending on $p$ in order to avoid pollution. But for the other examples we have to resort to numerical evidence. 

The importance of the pollution factor should not be underestimated, since both the numerical accuracy and the conditioning of the Schur complement depend on it. In addition, the quality of the error estimator deteriorates if $\tilde{p}$ is chosen too small. However, choosing $\tilde{p}$ unnecessarily large is not desirable from an efficiency point of view. 

In Figures \ref{chap4:fig:example1_inf-sup}, \ref{chap4:fig:example2_inf-sup} and \ref{chap4:fig:example3_inf-sup} we plot the inf-sup constants against the number of degrees of freedom (DoFs) in $U^\delta$ for the three examples from the previous section, where we used uniform refinement. 
For fixed $\tilde p$, the maximal pollution factor appears to grow linearly as a function of $\kappa$.

For the example from Section \ref{chap4:example3} we observe a sudden growth at $\kappa = 300$. Fortunately, not only in this case, we see that it is not difficult to produce uniformly bounded pollution factors by slightly increasing $\tilde{p}$.
\begin{figure}[!htb]
    \begin{subfigure}{.5\textwidth}
    \centering
    \includegraphics[width=\linewidth]{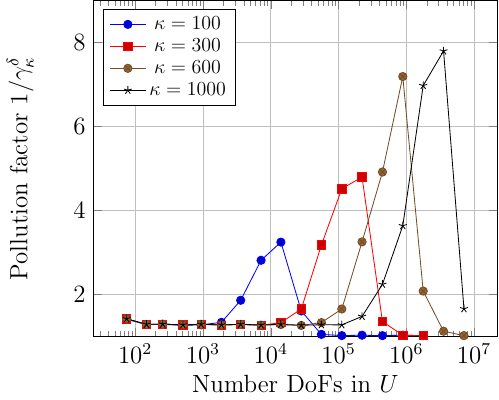}
    \end{subfigure}\hspace*{0cm}%
    \begin{subfigure}{.5\textwidth}
    \centering
    \includegraphics[width=\linewidth]{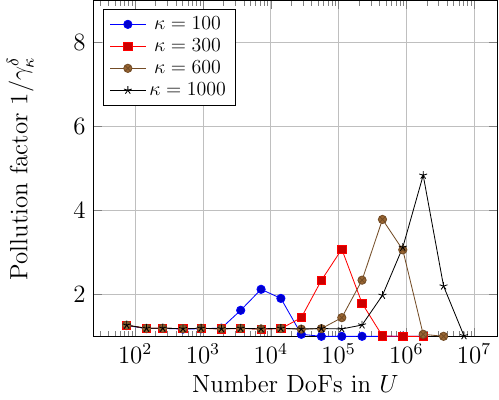}
    \end{subfigure}
\caption{Pollution factors for the example from Section \ref{chap4:example1}, for different values of $\kappa$ and $p=3$. Left: $\tilde{p}=5$, right: $\tilde{p}=6$}
\label{chap4:fig:example1_inf-sup}
\end{figure}

\begin{figure}[!htb]
    \begin{subfigure}{.5\textwidth}
    \centering
    \includegraphics[width=\linewidth]{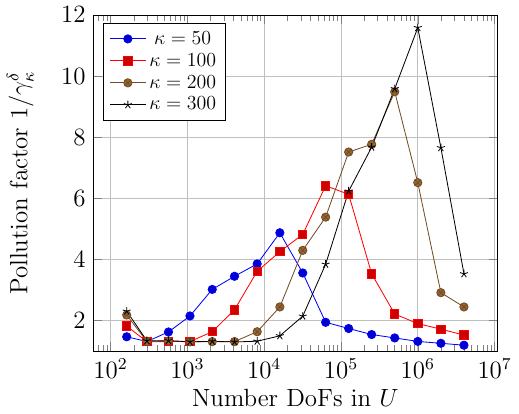}
    \end{subfigure}\hspace*{0cm}%
    \begin{subfigure}{.5\textwidth}
    \centering
    \includegraphics[width=\linewidth]{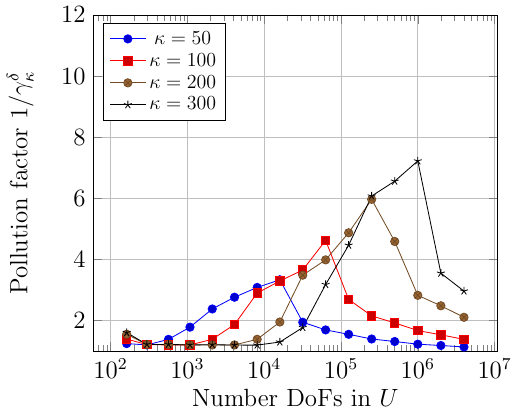}
    \end{subfigure}
\caption{Pollution factors for the example from Section \ref{chap4:example2}, for different values of $\kappa$ and $p=3$. Left: $\tilde{p}=5$, right: $\tilde{p}=6$}
\label{chap4:fig:example2_inf-sup}
\end{figure}

\begin{figure}[!htb]
    \begin{subfigure}{.5\textwidth}
    \centering
    \includegraphics[width=\linewidth]{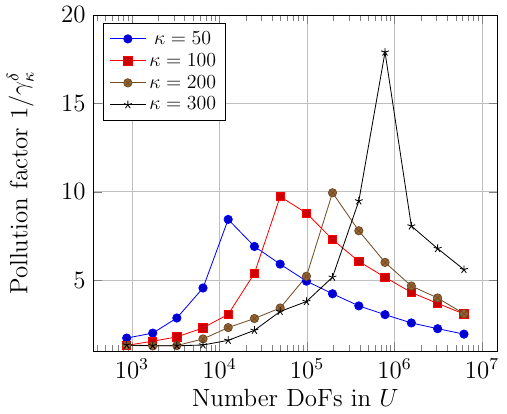}
    \end{subfigure}\hspace*{0cm}%
    \begin{subfigure}{.5\textwidth}
    \centering
    \includegraphics[width=\linewidth]{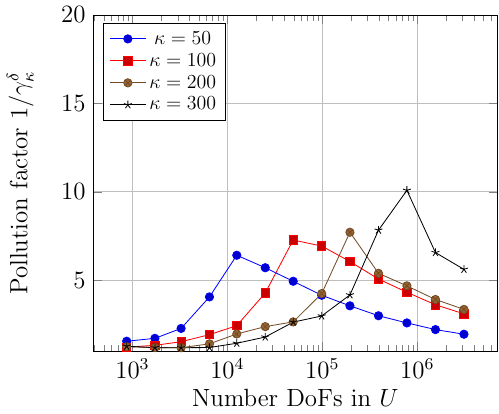}
    \end{subfigure}
\caption{Pollution factors for the example from Section \ref{chap4:example3}, for different values of $\kappa$ and $p=3$. Left: $\tilde{p}=5$, right: $\tilde{p}=6$}
\label{chap4:fig:example3_inf-sup}
\end{figure}

\FloatBarrier

\subsection{Analyzing the preconditioner}

In this section we investigate the quality of our preconditioner for $V_\kappa$. In Section \ref{chap4:sec:Iterative solvers} we mentioned that the quality of a preconditioner depends on the constants $\gamma_{V_\kappa}$ and $\Gamma_{V_\kappa}$ as defined in \eqref{chap4:spectral-equivalence}. For multiplicative subspace corrections the upper bound $\Gamma_{V_\kappa}$ is always equal to $1$ and hence the quality of the preconditioners only depends on $\gamma_{V_\kappa}$. The condition number of the preconditioned system is equal to $1/\gamma_{V_\kappa}$.

In Figures \ref{chap4:fig:example1 prec}, \ref{chap4:fig:example2 prec} and \ref{chap4:fig:example3 prec} we plot the condition number of the preconditioned system against the number of DoFs in $U^\delta$ for the three examples from the previous section, where we used uniform refinement. 

There appears to be an algebraic growth of the condition number as a function of the wave number $\kappa$.
Furthermore, we observe that the condition numbers are small on coarse meshes, increase sharply when $\frac{\kappa h}{\tilde p}\approx 1$, and stabilize on finer meshes. This sharp increase is less pronounced for the scattering domains.

\begin{figure}[h!]
    \begin{subfigure}{.5\textwidth}
    \centering
    \includegraphics[width=\linewidth]{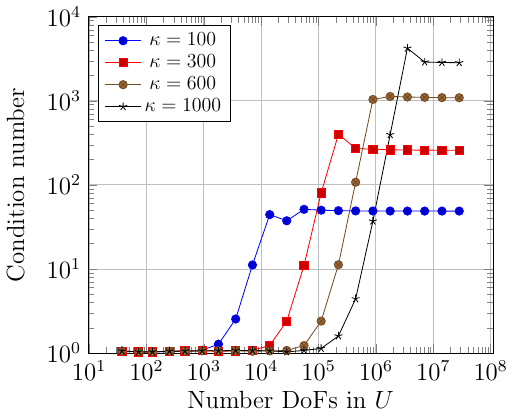}
    \end{subfigure}\hspace*{0cm}%
    \begin{subfigure}{.5\textwidth}
    \centering
    \includegraphics[width=\linewidth]{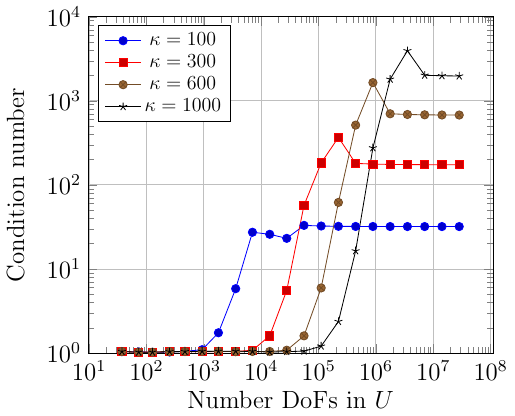}
    \end{subfigure}
\caption{Condition number of the preconditioned system for the example from Section \ref{chap4:example1}, for different values of $\kappa$. Left: $p=3$ and $\tilde{p}=5$, right: $p=3$ and $\tilde{p}=6$.}
\label{chap4:fig:example1 prec}
\end{figure}
\begin{figure}[h!]
    \begin{subfigure}{.5\textwidth}
    \centering
    \includegraphics[width=\linewidth]{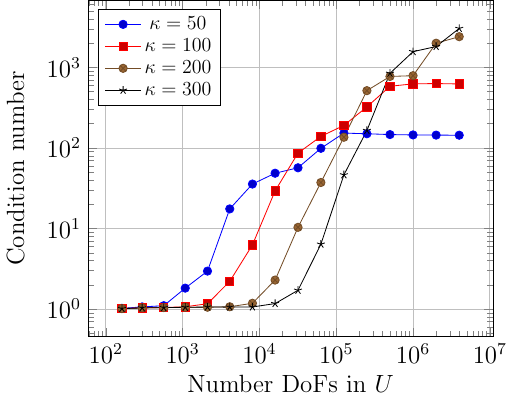}
    \end{subfigure}\hspace*{0cm}%
    \begin{subfigure}{.5\textwidth}
    \centering
    \includegraphics[width=\linewidth]{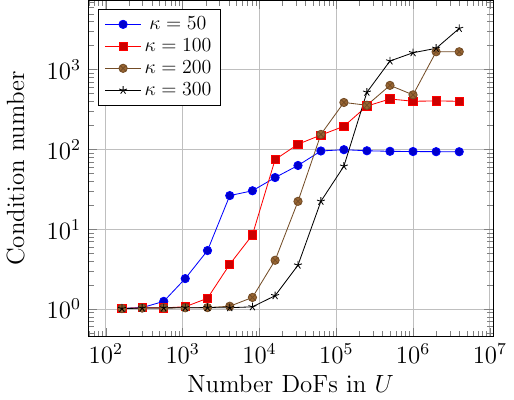}
    \end{subfigure}
\caption{Condition number of the preconditioned system for the example from Section \ref{chap4:example2}, for different values of $\kappa$. Left: $p=3$ and $\tilde{p}=5$, right: $p=3$ and $\tilde{p}=6$.}
\label{chap4:fig:example2 prec}
\end{figure}
\begin{figure}[h!]
    \begin{subfigure}{.5\textwidth}
    \centering
    \includegraphics[width=\linewidth]{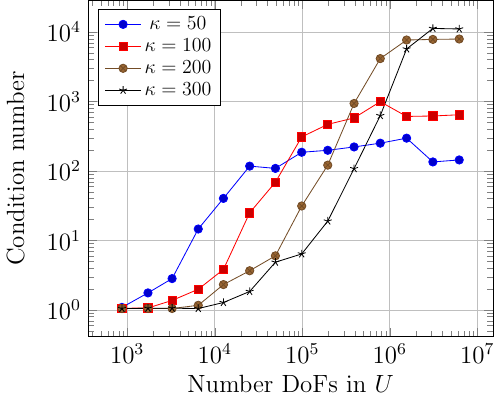}
    \end{subfigure}\hspace*{0cm}%
    \begin{subfigure}{.5\textwidth}
    \centering
    \includegraphics[width=\linewidth]{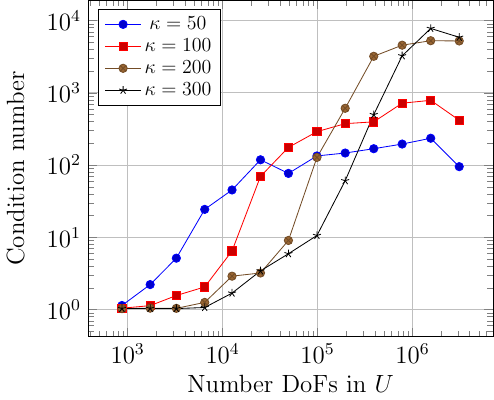}
    \end{subfigure}
\caption{Condition number of the preconditioned system for the example from Section \ref{chap4:example3}, for different values of $\kappa$. Left: $p=3$ and $\tilde{p}=5$, right: $p=3$ and $\tilde{p}=6$.}
\label{chap4:fig:example3 prec}
\end{figure}

\subsubsection{Changing $m_\ell$}\label{chap4:sec:m_ell}
Often, the behavior of the two-grid method provides useful insights about what to expect from the multigrid method. The two-grid method is obtained by replacing the sequence of meshes $\tria_0 \prec\tria_1\prec\ldots\prec\tria_L=\tria$ with the two-level hierarchy $\tria_{L-1}\prec\tria_L$ and by substituting the smoother on $\tria_{L-1}$ with $M^{-1}_{L-1}$ as defined in Section \ref{chap4:multigrid}. In Figure \ref{chap4:fig:example1_twogrid} we present the numerically computed condition numbers for the two-grid method for the first domain $\Omega=(0,1)^2$. The two-grid method has excellent condition numbers, except for the meshes corresponding to $\tfrac{\kappa h}{\tilde{p}}\approx 1$. As mentioned in Remark \ref{chap4:remark:Helmholtz-kernel}, the larger condition numbers for these meshes are probably due to the presence of a large Helmholtz kernel. 
\begin{figure}[h!]
    \centering
    \includegraphics[width=0.5\linewidth]{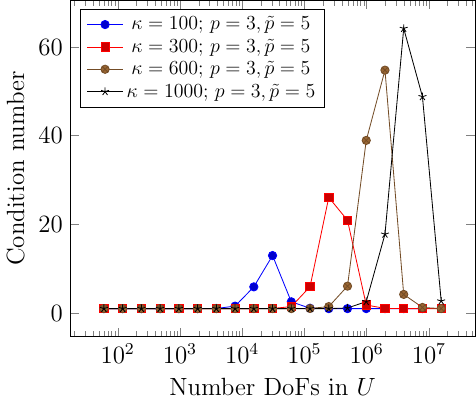}
\caption{Condition number using the two-grid preconditioner for the first example, for different values of $\kappa$.}
\label{chap4:fig:example1_twogrid}
\end{figure}

Motivated by the observations from Figure \ref{chap4:fig:example1_twogrid} we will now change the value of $m_\ell$, as defined in Section \ref{chap4:multigrid}, for meshes $\tria_\ell$ with mesh-size $h\approx \tfrac{\tilde{p}}{\kappa}$. In Figure \ref{chap4:fig:example1_different_m} we report on the results for the example from Section \ref{chap4:example1}, with $\kappa = 1000$, $p=3$ and $\tilde{p}=5$. This figure shows that the condition numbers decrease on meshes where $m_\ell$ is modified, which also leads to an overall reduction of condition numbers on finer meshes where we kept $m_\ell=1$ unchanged.

\begin{figure}[h!]
    \centering
    \includegraphics[width=0.5\linewidth]{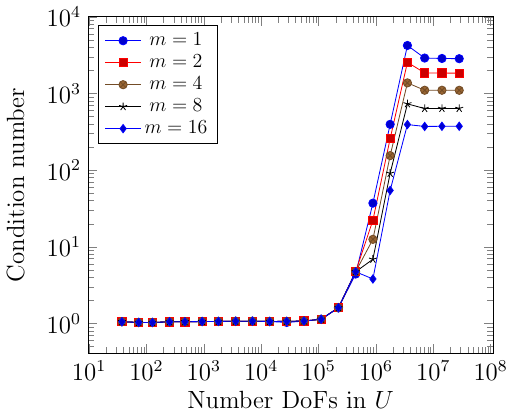}
\caption{Condition number of the preconditioned system for the example from Section \ref{chap4:example1}, for $\kappa = 1000$, $p=3$ and $\tilde{p}=5$, where we set $m_\ell = m$ for $\tria_{15}, \tria_{16}$ and $\tria_{17}$.}
\label{chap4:fig:example1_different_m}
\end{figure}

\FloatBarrier
\subsection{Solving the Helmholtz problems}\label{chap4:sec:solving}
In this section we report on the number of MINRES iterations needed to solve the Helmholtz equation. We will investigate the relation between the number of iterations needed and the wave number $\kappa$. 

Before we tackle the more difficult scattering domains, we first demonstrate the use of different stopping criteria and prolongation of solutions from previous meshes for the first example on the unit square. Combining both ideas, the number of iterations needed is significantly reduced.

\subsubsection{Stopping criteria}
For the example from Section \ref{chap4:example1} we will use two different stopping criteria. Firstly, we will stop when the initial residual has decreased by a factor of $10^8$, which we call stopping criterion $1$. Secondly, with stopping criterion $2$, the iteration is stopped when the approximate algebraic error is less than half the approximate total error, where we apply the approach outlined in Section~\ref{chap4:section:stoppingcriteria} to estimate both these errors. The starting vector in both cases is set to zero. Here, and in the rest of this section, the preconditioner for the Schur complement ${\bf Q}_S$ is chosen such that the spectrum of $({\bf Q}_S)^{-1} {\bf M}^U$ is contained in the interval $[0.9,1.1]$, which is achieved as described in Remark \ref{chap4:remark:Chebyshev}.

In Figure \ref{chap4:fig:comparestopping} the iteration numbers for both stopping criteria are shown. They seem to be growing linearly with $\kappa$, though fewer iterations are required for the second stopping criterion.

\begin{figure}[h!]
    \begin{subfigure}{.5\textwidth}
    \centering
    \includegraphics[width=\linewidth]{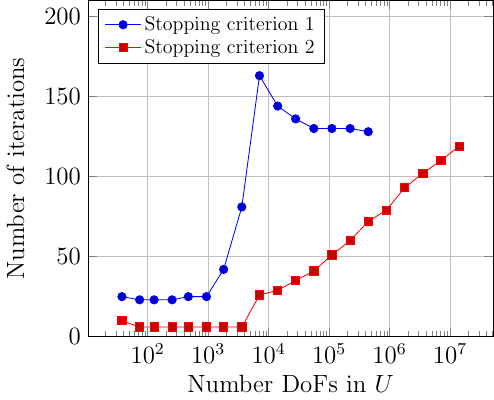}
    \end{subfigure}\hspace*{0cm}%
    \begin{subfigure}{.5\textwidth}
    \centering
    \includegraphics[width=\linewidth]{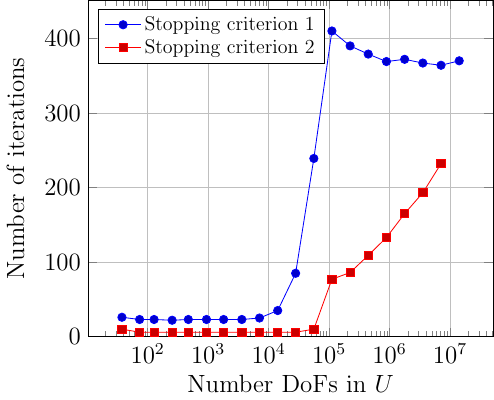}
    \end{subfigure}
    
     \begin{subfigure}{.5\textwidth}
    \centering
    \includegraphics[width=\linewidth]{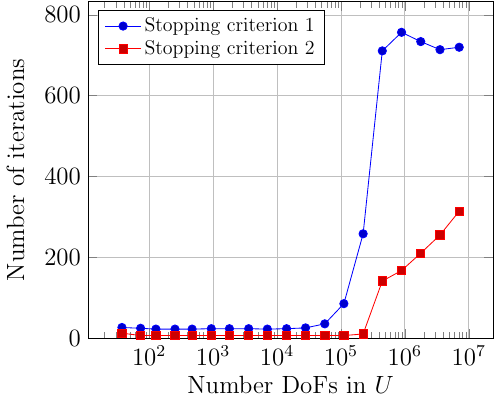}
    \end{subfigure}\hspace*{0cm}%
    \begin{subfigure}{.5\textwidth}
    \centering
    \includegraphics[width=\linewidth]{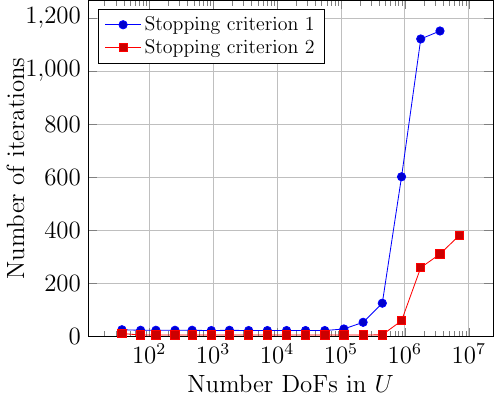}
    \end{subfigure}
\caption{Number of MINRES iterations for the example from Section \ref{chap4:example1}, for different values of $\kappa$. Left-upper: $\kappa=100$, right-upper: $\kappa = 300$, left-lower: $\kappa = 600$, right-lower: $\kappa = 1000$}
\label{chap4:fig:comparestopping}
\end{figure}

The second stopping criterion, however, does not terminate the MINRES solver prematurely. In Figure \ref{chap4:fig:error_against_iteration}, we show the error in the $U$-norm against the number of MINRES iterations for a generic case. In this figure, the vertical dashed line indicates when the second stopping criterion was met. Remarkably, Figure \ref{chap4:fig:error_against_iteration} shows that this stopping criterion was met at the exact moment the error in the $U$-norm reached its minimum. Furthermore, notice that the residual estimator $\|B_\kappa ' \tilde{\mathbbm{v}}^\delta\|_U$ converged much earlier than $\tilde{\mathbbm{u}}^\delta$, which was also mentioned in Section \ref{chap4:section:stoppingcriteria}. 

\begin{figure}[h!]
    \centering
    \includegraphics[width=0.5\linewidth]{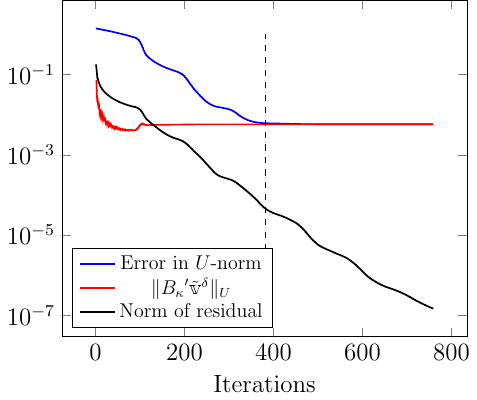}
\caption{Error in the $U$-norm, the size of the error estimator $\|{B_\kappa}' \tilde{\mathbbm{v}}^\delta\|_U$ and norm of the residual of the matrix-vector equation against the number of MINRES iterations for the example from Section \ref{chap4:example1}, with $\kappa=1000$, $p = 3$, $\tilde{p} = 5$ and number of DoFs in $U^\delta$ is $7087107$. The vertical dashed line indicates the iteration number where the second stopping criterion was met; beyond this point the error in the $U$-norm decreased only marginally.}
\label{chap4:fig:error_against_iteration}
\end{figure}

\subsubsection{Prolongating solutions}\label{chap4:sec:prolongate}
Rather than using the zero vector as the initial guess for the MINRES method, we can further reduce the number of iterations by prolongating the solution from the previous mesh. This approach, however, requires careful handling of the second stopping criterion. Since only a few iterations will be needed on finer meshes, the approximation of $\gamma_{V_\kappa}$ using harmonic Ritz values, as discussed in Section \ref{chap4:section:stoppingcriteria}, becomes less accurate. Consequently, the second stopping criterion may become unreliable.  

To address this issue, we adopt the following strategy: on each mesh, we carry over the approximation of $\gamma_{V_\kappa}$ from the previous mesh and update it only if a newly computed approximation using harmonic Ritz values is lower. This approach is justified by the observation that $\gamma_{V_\kappa}$ remains nearly constant for meshes with mesh-size $h < \tfrac{\tilde{p}}{\kappa}$, while for meshes where $h \approx \tfrac{\tilde{p}}{\kappa}$ we are still performing enough iterations to obtain a reasonable estimate for $\gamma_{V_\kappa}$.

In Figure \ref{chap4:fig:prolongate_example1} the iteration numbers are shown when we prolongate the solutions from previous meshes and use the stopping criterion from Section \ref{chap4:section:stoppingcriteria} together with the modifications described above. Again, the number of iterations appears to be growing linearly with $\kappa$. However, for fixed $\kappa$, the number of iterations now decreases with decreasing ${h<\frac{\tilde p}{\kappa}}$.

\begin{figure}[h!]
\centering
    \includegraphics[width=0.5\linewidth]{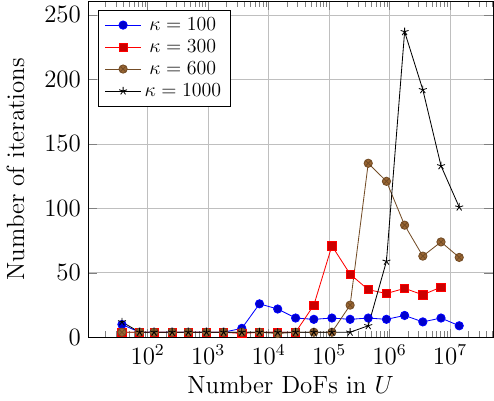}
\caption{Number of MINRES iterations against the number of DoFs in $U^\delta$ for the example from Section \ref{chap4:example1}, with stopping criterion $2$ and prolongated solutions, for $\kappa=100,300,600$ and $1000$, $p = 3$, $\tilde{p} = 5$.}
\label{chap4:fig:prolongate_example1}
\end{figure}

\subsubsection{Solving the Helmholtz equation on scattering domains}
We now will solve the Helmholtz equation on the scattering domains from Section \ref{chap4:example2} and Section \ref{chap4:example3}. We employ stopping criterion $2$ together with the prolongation of solutions as described in Section \ref{chap4:sec:prolongate}. 

In order to obtain optimal convergence in the $U$-norm, the solutions to these problems should be obtained using adaptive mesh refinement. Indeed, for the example from Section \ref{chap4:example2} with $\kappa=300$, in case of uniform refinement the error estimator was equal to $0.0106$ on a mesh with $\numprint{3988224}$ DoFs in $U^\delta$, while in case of adaptive refinement the error estimator was already equal to $0.0026$ on a mesh with $\numprint{4023945}$ DoFs in $U^\delta$.

Figure \ref{chap4:fig:iters} shows the number of iterations needed to solve the Helmholtz problem using adaptive refinement. To keep the inf-sup constants $\gamma_\kappa^\delta$ uniformly bounded, we increased the polynomial order of the test space for $\kappa=300$ from $\tilde{p}=5$ to $\tilde{p}=6$ for both scattering examples. We observe that the number of iterations grows about linearly as a function of the wave number $\kappa$.

To determine whether the current stopping criterion does not terminate the MINRES prematurely, we compare our solution with another solution produced by the MINRES method where we iterate until the approximate algebraic error is $20$ times smaller than the approximate total error (the approximate algebraic error and the approximate total error are defined in Section~\ref{chap4:section:stoppingcriteria}). In Figure \ref{chap4:fig:algebraic error} we plot the difference in the $U$-norm between both iterative solutions, divided by the error estimator $\|B_\kappa'\mathbbm{v}^\delta\|_U$. 
We observe that the difference in the $U$-norm is not always less than half the error estimator. This happens because our estimate of the constant $c$ introduced in Section \ref{chap4:sec:errorestimation} is not exact, primarily due to slightly overestimating $\gamma_{V_\kappa}$. Furthermore, $\gamma_\kappa^\delta$ is not exactly equal to $1$. Nevertheless, the performance of the stopping criterion appears to be robust with respect to the wave number $\kappa$. In addition, the difference in the $U$-norm is always smaller than $1$ on resolved meshes. 

For the trapping domain of Section \ref{chap4:example3}, we observe a drop in the number of iterations from $\kappa=100$ to $\kappa=200$. This is probably due to an overestimation of $\gamma_{V_\kappa}$ for $\kappa=50$ and $\kappa = 100$.

\begin{figure}[h!]
    \begin{subfigure}{.5\textwidth}
    \centering
    \includegraphics[width=\linewidth]{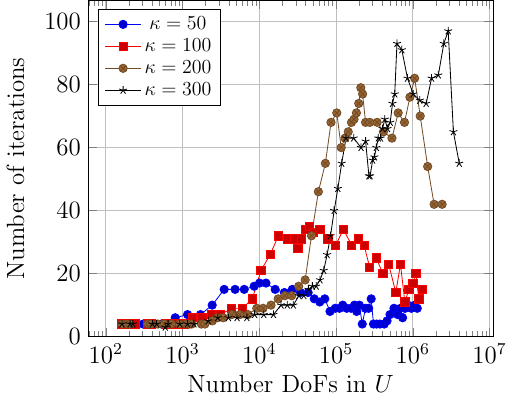}
    \end{subfigure}\hspace*{0cm}%
    \begin{subfigure}{.5\textwidth}
    \centering
    \includegraphics[width=\linewidth]{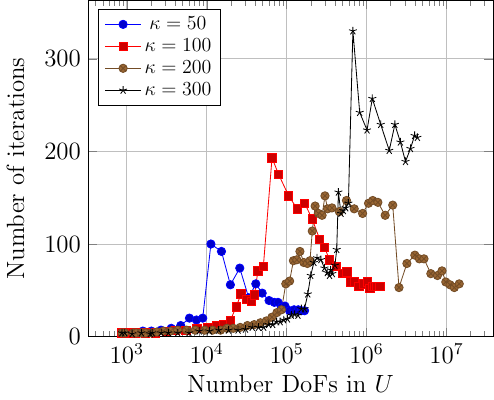}
    \end{subfigure}
\caption{Number of MINRES iterations against the number of DoFs in $U^\delta$ for different values of $\kappa$. Left: for the example from Section \ref{chap4:example2}, right: for the example from Section \ref{chap4:example3}.}
\label{chap4:fig:iters}
\end{figure}
\begin{figure}[h!]
    \begin{subfigure}{.5\textwidth}
    \centering
    \includegraphics[width=\linewidth]{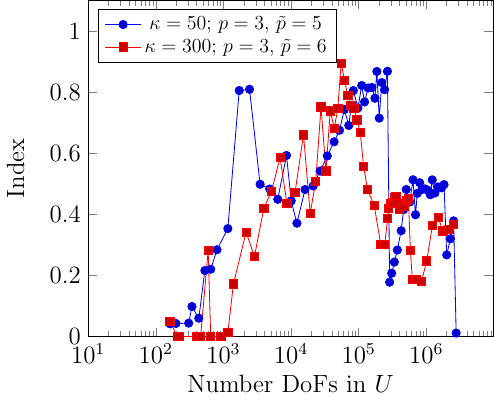}
    \end{subfigure}\hspace*{0cm}%
    \begin{subfigure}{.5\textwidth}
    \centering
    \includegraphics[width=\linewidth]{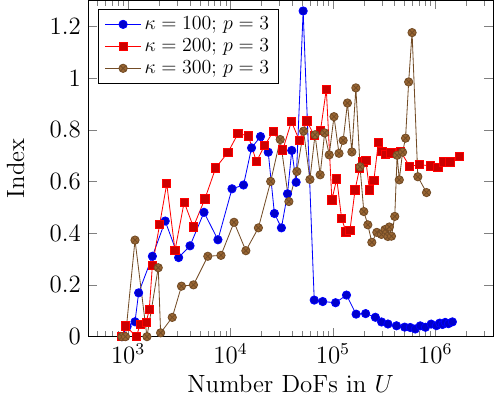}
    \end{subfigure}
\caption{The estimated algebraic error at termination of the MINRES iteration divided by the error estimator $\|B_\kappa'\mathbbm{v}^\delta\|_U$ against the number of DoFs in $U^\delta$ for different values of $\kappa$. Left: for the example from Section \ref{chap4:example2}, right: for the example from Section \ref{chap4:example3}.}
\label{chap4:fig:algebraic error}
\end{figure}

\FloatBarrier
\section{Conclusion}

In this article, we iteratively solved a pollution-free FOSLS formulation of the Helmholtz equation. Upon discretization of this FOSLS we obtained a saddle-point system for which we designed a block-preconditioner. This block preconditioner consists of two preconditioners: one for the Schur complement, which was straightforward to design, and one for the Riesz operator corresponding to the test space $V_\kappa^\delta$ equipped with the optimal test norm. To ensure that the latter preconditioner is Hermitian positive definite, we employed the theory of subspace corrections.

There are multiple advantages of the method described here. 
Firstly, this method is pollution-free, which is achieved by only slightly increasing the polynomial order on the test space if necessary.
Secondly, the use of general adaptive meshes is allowed. The preconditioner discussed in this article can be applied directly to any mesh without modifications and always leads to a convergent iterative solver. Being able to use adaptive meshes, we are able to recover optimal convergence rates for Helmholtz problems on scattering domains (this is demonstrated in \cite{204.18}). The error estimator, which is used to drive adaptive refinement, is obtained in terms of already computed quantities. 
Thirdly, it is possible to estimate the algebraic error accurately, which prevents unnecessary iterations.
Finally, the method is easy to implement and uses $\mathcal{O}(n)$ memory.  

Concerning the number of MINRES iterations needed, we observed a linear dependence on the wave number. Similar behavior has been reported in \cite{MR4630864} for $\Omega = (0,1)^2$, where a preconditioner is constructed for the DPG-method. However, our approach offers an advantage in terms of ease of implementation. For the (S)ORAS preconditioner, again for $\Omega=(0,1)^2$, a slightly better dependence has been reported for wave numbers up to $\kappa = 300$ in \cite{MR3644970}. However, there a coarse grid is used and the subdomain solves use more memory. Sweeping preconditioners for finite difference discretizations of the Helmholtz equation \cite{MR2818416} seem to produce good condition numbers of the preconditioned system for large $\kappa$, but are difficult to apply to unstructured grids or non-rectangular domains.

\subsection*{Acknowledgment}
The author wishes to thank his advisor Rob Stevenson for
the many helpful comments.


\begin{thebibliography}{TZNnHD20}

\bibitem[AFW97]{14.24}
D.N. Arnold, R.S. Falk, and R.~Winther.
\newblock Preconditioning in {$H({\rm div})$} and applications.
\newblock {\em Math. Comp.}, 66(219):957--984, 1997.

\bibitem[AFW00]{14.26}
D.N. Arnold, R.S. Falk, and R.~Winther.
\newblock Multigrid in {$H({\rm div})$} and {$H({\rm curl})$}.
\newblock {\em Numer. Math.}, 85(2):197--217, 2000.

\bibitem[BHPD23]{MR4630864}
Jacob Badger, Stefan Henneking, Socratis Petrides, and Leszek Demkowicz.
\newblock Scalable {DPG} multigrid solver for {H}elmholtz problems: a study on convergence.
\newblock {\em Comput. Math. Appl.}, 148:81--92, 2023.

\bibitem[BL97]{MR1615163}
A.~Brandt and I.~Livshits.
\newblock Wave-ray multigrid method for standing wave equations.
\newblock volume~6, pages 162--181. 1997.
\newblock Special issue on multilevel methods (Copper Mountain, CO, 1997).

\bibitem[BM84]{19.83}
J.~W. Barrett and K.~W. Morton.
\newblock Approximate symmetrization and {P}etrov-{G}alerkin methods for diffusion-convection problems.
\newblock {\em Comput. Methods Appl. Mech. Engrg.}, 45(1-3):97--122, 1984.

\bibitem[BM08]{35.8656}
A.~Buffa and P.~Monk.
\newblock Error estimates for the ultra weak variational formulation of the {H}elmholtz equation.
\newblock {\em M2AN Math. Model. Numer. Anal.}, 42(6):925--940, 2008.

\bibitem[BM19]{20.186}
M.~Bernkopf and J.M. Melenk.
\newblock Analysis of the {$hp$}-version of a first order system least squares method for the {H}elmholtz equation.
\newblock In {\em Advanced finite element methods with applications}, volume 128 of {\em Lect. Notes Comput. Sci. Eng.}, pages 57--84. Springer, Cham, [2019] \copyright 2019.

\bibitem[BP88]{MR917816}
James~H. Bramble and Joseph~E. Pasciak.
\newblock A preconditioning technique for indefinite systems resulting from mixed approximations of elliptic problems.
\newblock {\em Math. Comp.}, 50(181):1--17, 1988.

\bibitem[BS14]{35.8565}
D.~Broersen and R.P. Stevenson.
\newblock A robust {P}etrov-{G}alerkin discretisation of convection-diffusion equations.
\newblock {\em Comput. Math. Appl.}, 68(11):1605--1618, 2014.

\bibitem[BWY90]{MR1031439}
Randolph~E. Bank, Bruno~D. Welfert, and Harry Yserentant.
\newblock A class of iterative methods for solving saddle point problems.
\newblock {\em Numer. Math.}, 56(7):645--666, 1990.

\bibitem[CD98]{35.9373}
O.~Cessenat and B.~Despres.
\newblock Application of an ultra weak variational formulation of elliptic {PDE}s to the two-dimensional {H}elmholtz problem.
\newblock {\em SIAM J. Numer. Anal.}, 35(1):255--299, 1998.

\bibitem[CDW12]{45.44}
A.~Cohen, W.~Dahmen, and G.~Welper.
\newblock Adaptivity and variational stabilization for convection-diffusion equations.
\newblock {\em {ESAIM: Mathematical Modelling and Numerical Analysis}}, 46:1247--1273, 2012.

\bibitem[CF06]{MR2194984}
Peter Cummings and Xiaobing Feng.
\newblock Sharp regularity coefficient estimates for complex-valued acoustic and elastic {H}elmholtz equations.
\newblock {\em Math. Models Methods Appl. Sci.}, 16(1):139--160, 2006.

\bibitem[CFEV21]{38.25}
T.~Chaumont-Frelet, A.~Ern, and M.~Vohral\'{\i}k.
\newblock On the derivation of guaranteed and {$p$}-robust a posteriori error estimates for the {H}elmholtz equation.
\newblock {\em Numer. Math.}, 148(3):525--573, 2021.

\bibitem[CGPV13]{MR3084826}
Henri Calandra, Serge Gratton, Xavier Pinel, and Xavier Vasseur.
\newblock An improved two-grid preconditioner for the solution of three-dimensional {H}elmholtz problems in heterogeneous media.
\newblock {\em Numer. Linear Algebra Appl.}, 20(4):663--688, 2013.

\bibitem[CQ17]{38.72}
H.~Chen and W.~Qiu.
\newblock A first order system least squares method for the {H}elmholtz equation.
\newblock {\em J. Comput. Appl. Math.}, 309:145--162, 2017.

\bibitem[CX13]{MR3085122}
Zhiming Chen and Xueshuang Xiang.
\newblock A source transfer domain decomposition method for {H}elmholtz equations in unbounded domain.
\newblock {\em SIAM J. Numer. Anal.}, 51(4):2331--2356, 2013.

\bibitem[DGMZ12]{64.155}
L.~Demkowicz, J.~Gopalakrishnan, I.~Muga, and J.~Zitelli.
\newblock Wavenumber explicit analysis of a {DPG} method for the multidimensional {H}elmholtz equation.
\newblock {\em Comput. Methods Appl. Mech. Engrg.}, 213/216:126--138, 2012.

\bibitem[EG12]{MR3050918}
O.~G. Ernst and M.~J. Gander.
\newblock Why it is difficult to solve {H}elmholtz problems with classical iterative methods.
\newblock In {\em Numerical analysis of multiscale problems}, volume~83 of {\em Lect. Notes Comput. Sci. Eng.}, pages 325--363. Springer, Heidelberg, 2012.

\bibitem[EOV06]{MR2199758}
Y.~A. Erlangga, C.~W. Oosterlee, and C.~Vuik.
\newblock A novel multigrid based preconditioner for heterogeneous {H}elmholtz problems.
\newblock {\em SIAM J. Sci. Comput.}, 27(4):1471--1492, 2006.

\bibitem[Erl08]{MR2387515}
Yogi~A. Erlangga.
\newblock Advances in iterative methods and preconditioners for the {H}elmholtz equation.
\newblock {\em Arch. Comput. Methods Eng.}, 15(1):37--66, 2008.

\bibitem[ESW05]{book_Elman}
Howard~C. Elman, David~J. Silvester, and Andrew~J. Wathen.
\newblock {\em Finite Elements and Fast Iterative Solvers : with Applications in Incompressible Fluid Dynamics.}
\newblock Numerical Mathematics and Scientific Computation. Oxford University Press, 2005.

\bibitem[EY11a]{MR2789492}
Bj\"orn Engquist and Lexing Ying.
\newblock Sweeping preconditioner for the {H}elmholtz equation: hierarchical matrix representation.
\newblock {\em Comm. Pure Appl. Math.}, 64(5):697--735, 2011.

\bibitem[EY11b]{MR2818416}
Bj\"orn Engquist and Lexing Ying.
\newblock Sweeping preconditioner for the {H}elmholtz equation: moving perfectly matched layers.
\newblock {\em Multiscale Model. Simul.}, 9(2):686--710, 2011.

\bibitem[GGS15]{MR3395145}
M.~J. Gander, I.~G. Graham, and E.~A. Spence.
\newblock Applying {GMRES} to the {H}elmholtz equation with shifted {L}aplacian preconditioning: what is the largest shift for which wavenumber-independent convergence is guaranteed?
\newblock {\em Numer. Math.}, 131(3):567--614, 2015.

\bibitem[GGS21]{MR4286258}
Shihua Gong, Ivan~G. Graham, and Euan~A. Spence.
\newblock Domain decomposition preconditioners for high-order discretizations of the heterogeneous {H}elmholtz equation.
\newblock {\em IMA J. Numer. Anal.}, 41(3):2139--2185, 2021.

\bibitem[GMO14]{75.63}
J.~Gopalakrishnan, I.~Muga, and N.~Olivares.
\newblock Dispersive and dissipative errors in the {DPG} method with scaled norms for {H}elmholtz equation.
\newblock {\em SIAM J. Sci. Comput.}, 36(1):A20--A39, 2014.

\bibitem[GS15]{MR3630816}
Jay Gopalakrishnan and Joachim Sch\"oberl.
\newblock Degree and wavenumber [in]dependence of {S}chwarz preconditioner for the {DPG} method.
\newblock In {\em Spectral and high order methods for partial differential equations---{ICOSAHOM} 2014}, volume 106 of {\em Lect. Notes Comput. Sci. Eng.}, pages 257--265. Springer, Cham, 2015.

\bibitem[GSV17a]{MR3647952}
I.~G. Graham, E.~A. Spence, and E.~Vainikko.
\newblock Domain decomposition preconditioning for high-frequency {H}elmholtz problems with absorption.
\newblock {\em Math. Comp.}, 86(307):2089--2127, 2017.

\bibitem[GSV17b]{MR3644970}
Ivan~G. Graham, Euan~A. Spence, and Eero Vainikko.
\newblock Recent results on domain decomposition preconditioning for the high-frequency {H}elmholtz equation using absorption.
\newblock In {\em Modern solvers for {H}elmholtz problems}, Geosyst. Math., pages 3--26. Birkh\"auser/Springer, Cham, 2017.

\bibitem[Het07]{MR2352336}
U.~Hetmaniuk.
\newblock Stability estimates for a class of {H}elmholtz problems.
\newblock {\em Commun. Math. Sci.}, 5(3):665--678, 2007.

\bibitem[HMP16]{138.296}
R.~Hiptmair, A.~Moiola, and I.~Perugia.
\newblock A survey of {T}refftz methods for the {H}elmholtz equation.
\newblock In {\em Building bridges: connections and challenges in modern approaches to numerical partial differential equations}, volume 114 of {\em Lect. Notes Comput. Sci. Eng.}, pages 237--278. Springer, [Cham], 2016.

\bibitem[HP22]{MR4490294}
Moritz Hauck and Daniel Peterseim.
\newblock Multi-resolution localized orthogonal decomposition for {H}elmholtz problems.
\newblock {\em Multiscale Model. Simul.}, 20(2):657--684, 2022.

\bibitem[KS07]{MR2277034}
Jung-Han Kimn and Marcus Sarkis.
\newblock Restricted overlapping balancing domain decomposition methods and restricted coarse problems for the {H}elmholtz problem.
\newblock {\em Comput. Methods Appl. Mech. Engrg.}, 196(8):1507--1514, 2007.

\bibitem[KZ15]{MR3359680}
Seungil Kim and Hui Zhang.
\newblock Optimized {S}chwarz method with complete radiation transmission conditions for the {H}elmholtz equation in waveguides.
\newblock {\em SIAM J. Numer. Anal.}, 53(3):1537--1558, 2015.

\bibitem[LJ21]{MR4196155}
Wei Leng and Lili Ju.
\newblock A diagonal sweeping domain decomposition method with source transfer for the {H}elmholtz equation.
\newblock {\em Commun. Comput. Phys.}, 29(2):357--398, 2021.

\bibitem[LMMR00]{182.35}
B.~Lee, T.~A. Manteuffel, S.~F. McCormick, and J.~Ruge.
\newblock First-order system least-squares for the {H}elmholtz equation.
\newblock volume~21, pages 1927--1949. 2000.
\newblock Iterative methods for solving systems of algebraic equations (Copper Mountain, CO, 1998).

\bibitem[MS11]{202.8}
J.M. Melenk and S.~Sauter.
\newblock Wavenumber explicit convergence analysis for {G}alerkin discretizations of the {H}elmholtz equation.
\newblock {\em SIAM J. Numer. Anal.}, 49(3):1210--1243, 2011.

\bibitem[MS23]{204.18}
H.~Monsuur and R.P. Stevenson.
\newblock A pollution-free ultra-weak {FOSLS} discretization of the {H}elmholtz equation.
\newblock {\em Comput. Math. Appl.}, 148:241--255, 2023.

\bibitem[Pet17]{243.53}
D.~Peterseim.
\newblock Eliminating the pollution effect in {H}elmholtz problems by local subscale correction.
\newblock {\em Math. Comp.}, 86(305):1005--1036, 2017.

\bibitem[PRR05]{MINRES_UZAWA}
J\"{o}rg Peters, Volker Reichelt, and Arnold Reusken.
\newblock Fast iterative solvers for discrete {S}tokes equations.
\newblock {\em SIAM J. Sci. Comput.}, 27(2):646--666, 2005.

\bibitem[PS75]{MR383715}
C.~C. Paige and M.~A. Saunders.
\newblock Solutions of sparse indefinite systems of linear equations.
\newblock {\em SIAM J. Numer. Anal.}, 12(4):617--629, 1975.

\bibitem[SAB14]{MR3285899}
Christiaan~C. Stolk, Mostak Ahmed, and Samir~Kumar Bhowmik.
\newblock A multigrid method for the {H}elmholtz equation with optimized coarse grid corrections.
\newblock {\em SIAM J. Sci. Comput.}, 36(6):A2819--A2841, 2014.

\bibitem[SLV13]{MR3084825}
A.~H. Sheikh, D.~Lahaye, and C.~Vuik.
\newblock On the convergence of shifted {L}aplace preconditioner combined with multilevel deflation.
\newblock {\em Numer. Linear Algebra Appl.}, 20(4):645--662, 2013.

\bibitem[SS11]{MR2774834}
David~J. Silvester and Valeria Simoncini.
\newblock An optimal iterative solver for symmetric indefinite systems stemming from mixed approximation.
\newblock {\em ACM Trans. Math. Software}, 37(4):Art. 42, 22, 2011.

\bibitem[Ste08]{MR2353951}
Rob Stevenson.
\newblock The completion of locally refined simplicial partitions created by bisection.
\newblock {\em Math. Comp.}, 77(261):227--241, 2008.

\bibitem[Sto13]{MR3647407}
Christiaan~C. Stolk.
\newblock A rapidly converging domain decomposition method for the {H}elmholtz equation.
\newblock {\em J. Comput. Phys.}, 241:240--252, 2013.

\bibitem[TZNnHD20]{MR4126398}
Matthias Taus, Leonardo Zepeda-N\'u\~nez, Russell~J. Hewett, and Laurent Demanet.
\newblock L-sweeps: a scalable, parallel preconditioner for the high-frequency {H}elmholtz equation.
\newblock {\em J. Comput. Phys.}, 420:109706, 32, 2020.

\bibitem[VG14]{MR3179763}
A.~Vion and C.~Geuzaine.
\newblock Double sweep preconditioner for optimized {S}chwarz methods applied to the {H}elmholtz problem.
\newblock {\em J. Comput. Phys.}, 266:171--190, 2014.

\bibitem[Xu92]{MR1193013}
Jinchao Xu.
\newblock Iterative methods by space decomposition and subspace correction.
\newblock {\em SIAM Rev.}, 34(4):581--613, 1992.

\end{thebibliography}
\end{document}